\expandafter\def\csname input@path\endcsname{%
    {sty/},
    {img/},
    {bib/}
}

\RequirePackage{xparse,expl3}
\documentclass[numbers]{vmsta2}
\usepackage{subcaption}
\volume{0}
\issue{0}
\pubyear{2026}
\articletype{research-article}

\newtheorem{thm}{Theorem}
\newtheorem{remark}{Remark}

\newtheorem{example}{Example}
\newtheorem{assumption}{Assumption}

\theoremstyle{definition}

\begin{document}
\begin{frontmatter}

\pretitle{Research Article}

\title{Multiscale Asymptotic Analysis of Kernel-Smoothed Solutions to Fractional Riesz-Bessel Equations with Random Initial Conditions}
\runtitle{Multiscale Asymptotics of Kernel-Smoothed Solutions to FRBE}

\begin{aug}
\author[a]{\inits{S.}\fnms{Shahid}~\snm{Khan}\ead[label=e1]{shahid.khan@latrobe.edu.au}\orcid{0009-0008-5402-1938}}

\thankstext[type=corresp,id=cor1]{Corresponding author.}
\author[a]{\inits{A.}\fnms{Andriy}~\snm{Olenko}\thanksref{cor1}\ead[label=e2]{a.olenko@latrobe.edu.au}\orcid{0000-0002-0917-7000}}

\address[a]{\institution{Department of Mathematical and Physical Sciences, La Trobe University}, \\
Melbourne, VIC 3086, \cny{Australia}}
\runauthor{S. Khan, A. Olenko}
\end{aug}

\dedicated{The paper is dedicated to the 75th anniversary of Professor Nikolai Leonenko.}

\begin{abstract}
This paper investigates fractional Riesz–Bessel equations with random initial conditions that exhibit either classical or cyclic long-range dependence. It studies zoom-in asymptotics for the corresponding solutions and establishes multiscaling limit theorems. It is known that for similar problems, non-degenerate multiscaling limits may not exist in general. The paper develops a kernel-smoothing approach for these equations and obtains non-degenerate limit fields under suitable normalisation and rescaling.
It proves that the kernel-smoothed solutions converge weakly to Gaussian random fields, which are non-stationary in both time and space. Their spectral representations and covariance functions are derived. The paper also analyses the regularity and dependence structure of the limit fields. In particular, under appropriate general assumptions on the smoothing kernel, the limits exhibit long-range dependence in time and short-range dependence in space. Numerical examples for the case of Matérn-type kernels are provided to illustrate the theoretical results.
\end{abstract}

\begin{keyword}
\kwd{fractional Riesz-Bessel equations}
\kwd{random partial differential equations}
\kwd{cyclic long memory}
\kwd{spectral singularities}
\kwd{multiscaling limit theorems}
\kwd{Mat\'ern kernel}
\end{keyword}

\begin{keyword}[class=MSC]%
\kwd{60F05}
\kwd{60H15}
\kwd{60G15}
\kwd{60G60}
\end{keyword}

\end{frontmatter}

\section{Introduction}
Fractional Riesz-Bessel equations extend classical kinetic models by using a composite fractional operator that combines the non-local behaviour of the fractional Laplacian with the regularising effect of the Bessel potential operator.
Such equations are widely used in the study of transport in disordered media, multifractal dynamics in financial systems, and high-frequency turbulence, see \cite{Gorenflo2020, Metzler1994}.
The work~\cite{Caputo1967} initiated the development of this area, while \cite{Oldham1974} and \cite{Podlubny1998} established detailed mathematical frameworks for fractional operators and integrals that are widely used today. Earlier publications considered these equations with random initial conditions given by Gaussian random fields \cite{Anh1999II, Anh2000}. For such models, the asymptotic behaviour of the solution is determined not only by the differential operators but also by the spectral structure of the initial condition.

The first probabilistic approaches to the heat equation with random initial conditions were introduced by De F\'eriet and Rosenblatt~\cite{De1956, Rosenblatt1968}. In recent years, there has been considerable interest in equations of this type with general random initial conditions. Several publications have demonstrated how the structure of these initial conditions influences the resulting solutions and their properties. The dependence structure of the initial random field is typically characterised by its spectral density: singularities at zero frequency correspond to classical long-range dependence, whereas singularities at nonzero frequencies are associated with cyclic long-memory behaviour \cite{Alghamdi2024}. Later work extended this analysis to broad classes of random environments and non-homogeneous settings~\cite{Becus1980, Uboe1995II}.  For classical diffusion equations, scaling limits of random solutions were derived in \cite{Albeverio1994, Leonenko1998}, while related methods were applied to Burgers-type equations in \cite{Leonenko1999, Leonenko1998a}. These studies established general principles for investigating asymptotics of rescaled random solution fields.

More recent research has focused on fractional operators and long-range dependence. Gay and Heyde \cite{Gay1990} showed that fractional Laplace operators generate random fields with long-range dependence. Further studies of stochastic heat equations with fractional Laplace operators were carried out in \cite{Angulo2000, Anh1999}. The results by Anh and Leonenko \cite{Anh1999II} proved that when the initial spectrum has a singularity at the origin, suitably normalised solutions converge to non-Gaussian limits. This established a direct link between fractional operators, singular spectral densities, and limit theorems. The analysis was further developed by renormalisation and homogenization methods for fractional-in-time/space diffusion equations with random input \cite{Anh2000, Anh2002}. These results belong to the non-central limit theory for strongly dependent random fields, see \cite{Dobrushin1979, Taqqu1979}. Related asymptotic problems have been studied for Airy and Korteweg--de Vries equations \cite{Beghin2000, Knopova2004}. Stochastic models on spheres and hyperbolic diffusion equations were analysed in \cite{Anh2021, Broadbrige2020}. Further results for diffusion in expanding space--time were recently obtained in \cite{Broadbridge2024}.

Contrary to the classical increasing domain asymptotic settings, multiscaling refers to asymptotic zoom-in regimes in which the normalisation and the space--time rescaling are used to study local behaviour.
For stochastic fractional diffusion equations, these settings lead to multiscaling limit theorems for their solutions, as well as the classification of the limit behaviour depending on the initial conditions~\cite{Alghamdi2025}. The analysis of fractional Riesz--Bessel equations with cyclic long-memory initial conditions in~\cite{Leonenko2024} demonstrated that such rescaled solutions converge to Gaussian random fields. Related scaling and homogenization questions have also been studied for time-fractional relativistic diffusion equations and for coupled parabolic and reaction--diffusion--wave systems \cite{Liu2010, Liu2015, Liu2018}. These works demonstrated how the limit behaviour depends on the fractional operator and the initial spectrum.

Kernel smoothing is often introduced to obtain improved behaviour and non-degenerate limits by controlling high-frequency contributions~\cite{Alghamdi2025}. In numerous applications, this approach functions as a filter applied to realisations of random processes or fields~\cite{Alodat2020, Anh2003}. For example, the work of~\cite{Zhang2022} considered kernel smoothing methods for stochastic partial differential equations, where solutions to the Fokker–Planck equation were approximated using adaptive Gaussian kernels. The approach was combined with stochastic filtering to estimate probability distributions from noisy observations. This established a direct link between kernel smoothing, types of considered equations and properties of the limits.

The existing literature (see~\cite{Alghamdi2025, Alghamdi2024, Alghamdi2025II} and the references therein) provides multiscaling theorems for several classes of equations with long-range and cyclically dependent initial conditions, including fractional and higher-order equations. However, the results in~\cite{Alghamdi2025, Alghamdi2025II} demonstrate that multiscaling limits do not exist in all cases and suggest using kernel smoothing to obtain non-degenerate asymptotic behaviour. To the best of our knowledge, no existing work has considered fractional Riesz–Bessel equations in conjunction with kernel smoothing and multiscale asymptotic analysis under general spectral settings. This study considers both classical and cyclic long-memory initial conditions, extending earlier results on the scaling limits of the corresponding solution fields. It also investigates the smoothness properties and dependence structures of these limit fields.

The paper is organised as follows. Section~\ref{defP2} provides the main definitions, notations, and some basic assumptions used throughout the paper. Section~\ref{MultiscalingFRF} derives limit theorems for filtered random fields with random initial conditions, for both classical and cyclic long-range dependence cases. It also presents numerical examples with realisations of the limit fields and their covariance structures.  H\"older continuity and long-range dependence properties of the obtained multiscaling limit fields are studied in Section~\ref{Holder}.  Section~\ref{conclusionP2} concludes the paper with a brief discussion and directions for future research.

All numerical computations and plotting in this paper were performed using the software R (version 4.6.0). The corresponding R code is freely available in the folder
”Research materials” from the website \url{https://sites.google.com/site/olenkoandriy/}.

\section{ Definitions and notations} \label{defP2}
This section provides the main notations and background material required in the following sections.

Standard notations such as $\overline{c}$ for the complex conjugate of $c$, $\mathcal{B}(\mathbb{R})$ for the $\sigma$-field of Borel sets on $\mathbb{R}$, $\Gamma(\cdot)$ for the Gamma function, and $\mathbf{1}_A(\cdot)$ for the indicator function of a set $A$ are used throughout. The symbol~$C$ denotes constants, which exact values are not important for this exposition and may vary, even within the same proof.

We denote the modified Bessel function of the second kind by $K_\nu(\cdot)$, see~\cite[p.~78]{watson1922}. The Mittag-Leffler function is denoted by $E_\beta(\cdot)$, where $0<\beta<1$. More details on the Mittag-Leffler function and its properties can be found in~\cite{Mainardi2014}.

For $s\in\mathbb{R}$, the Sobolev space $H^s(\mathbb{R})$ consists of all functions $f\in L^2(\mathbb{R})$ such that
$\|f\|_{H^s}^2
   : =
    \int_{\mathbb{R}}
    (1+|\lambda|^2)^s
    |\widehat f(\lambda)|^2\,d\lambda<+\infty.
$

Let $u(t,x)$ be a real-valued function of the two arguments $x \in \mathbb{R}^d$ and $t>0.$ The time derivative of order $\beta \in (0,1]$ is defined by
\begin{equation*} 
\frac{\partial^{\beta} u(t,x)}{\partial t^{\beta}} :=
\begin{cases}
\dfrac{\partial u}{\partial t}(t,x), & \text{if } \beta = 1, \\[10pt]
\left( \mathcal{D}_t^{\beta} u \right)(t,x), & \text{if } \beta \in (0,1),
\end{cases}
\end{equation*}
where
\[
\left( \mathcal{D}_t^{\beta} u \right)(t,x)
:= \frac{1}{\Gamma(1 - \beta)}
\left[
\frac{\partial}{\partial t} \int_0^t (t - \tau)^{-\beta} u(\tau, x)\, d\tau
- \frac{u(0,x)}{t^{\beta}}
\right],
\qquad t>0,
\]
is the regularised fractional derivative in the Caputo–Djrbashian sense \cite[(2.138)]{Podlubny1998}.

The publications \cite{Alghamdi2025II, Anh1999II} and \cite{Anh2000} studied fractional Riesz–Bessel equations (FRBE)
\begin{equation}\label{FRBE}
\frac{\partial^\beta u(t,x)}{\partial t^\beta} = -\mu (I - \Delta)^{\gamma/2} (-\Delta)^{\alpha/2} u(t,x), \quad t > 0,
\end{equation}
subject to a random initial condition
\begin{equation}\label{Icond}
u(0,x) = \xi(x),
\end{equation}
where $\alpha \geq 0, \, \gamma>0, \, \mu >0,$   $\xi(x)$, $x \in \mathbb{R}^d,$ is a zero-mean Gaussian random field, and $(-\Delta)^{\alpha/2}$ and $(I-\Delta)^{\gamma/2}$ are fractional inverse operators of the Riesz and Bessel potentials, respectively, see, for example, \cite{Anh1999II, Anh2000}.

The Cauchy problem (\ref{FRBE}) with the non-random initial condition given by the Dirac delta function, $u(0,x) = \delta(x),$  has a unique solution determined by $\widehat{G}(t,{x})$, $t>0$, ${x} \in \mathbb{R}^d,$ the Fourier transform with respect to the second argument of the fundamental solution (i.e., the Green function) of the non-random Cauchy problem.

The solution to the initial value problem (\ref{FRBE}) and (\ref{Icond}) in convolution form is defined~as
\begin{equation}
\label{convform}
u(t,x) = \int_{\mathbb{R}^n} G(t,x-y)\,u(0,y)\,dy,
\end{equation}
where the Green function $G(t,x)$ is specified by its Fourier transform.

In the literature this solution is called a Green-type solution or a mean-square solution, as it can be interpreted in the mean-square sense. For more details and justifications of this approach,
consult \cite{Anh2001a, Anh2003}.

For simplicity, this paper focuses on the one-dimensional case of $d=1.$ To make the notations consistent with multidimensional cases considered in the mentioned publications, the operator $\Delta=\partial^2/\partial x^2$ denotes the one-dimensional Laplacian.

Let $\xi(x)$, $x \in \mathbb{R}$, be a measurable stationary real-valued zero-mean Gaussian random process defined on a probability space $(\Omega, \mathcal{F}, P).$ Let $Z(\cdot)$ and $W(\cdot)$ denote, respectively, an orthogonal random measure and a Gaussian white-noise random measure on $\mathbb{R},$ with $\mathbb{E}|Z(d\lambda)|^2 = F(d\lambda),$ where $F(\cdot)$ is the spectral measure. In this paper, the random measures $Z(\cdot)$ and $W(\cdot)$ are symmetric, ensuring that the process $\xi(x)$ is real-valued.

The covariance function of $\xi(x)$ can be written as
\begin{equation}\label{P2cov}
 r(x):=\mathrm{Cov}(\xi(0), \xi(x)) = \int_{\mathbb{R}} e^{i  x \lambda } F(d\lambda),
\end{equation} where $F(\cdot)$ is the spectral measure.

If the spectral measure is absolutely continuous, it can be represented as
\[F(\Delta)=\int_{\Delta} f_{\xi}(\lambda)d\lambda,\quad \Delta\in \mathcal{B}(\mathbb R), \] where the function $f_{\xi}(\lambda), \lambda\in \mathbb R$ is called the spectral
density function of the stationary process $\xi(\cdot)$. As the process is real-valued, its spectral density is an even function.

Then, the following spectral representation of the random process $\xi(x)$ holds true
\[
\xi(x) = \int_{\mathbb {R}}e^{i \lambda x } Z(d\lambda)=\int_{\mathbb {R}}e^{i \lambda x } \sqrt{f_{\xi}(\lambda)}W(d\lambda).
\]

The Fourier transform of the Green function $G(t,x)$ is equal to, see  \cite{Anh2001a, Anh2003},
\begin{equation*}
\widehat{G}(t,\lambda)
= E_{\beta}\!\left(
-\mu t^{\beta} |\lambda|^{\alpha} \left( 1 + |\lambda|^2 \right)^{\gamma/2}
\right),
\end{equation*}
 where $E_\beta(\cdot)$ is the Mittag-Leffler function defined as
\[
E_\beta(s) := \sum_{k=0}^{\infty} \frac{s^k}{\Gamma(1 + \beta k)}, \qquad s \in \mathbb{R},\ 0 < \beta < 1.
\]
 For the negative values of its argument, it satisfies the inequality
\begin{equation}\label{upperch5}
   \frac{1}{1 + \Gamma(1-\beta)s} \leq E_\beta(-s) \leq\frac{1}{1 + \frac{s}{\Gamma(1+\beta)}}, \quad s\geq 0.
\end{equation}

The solution~(\ref{convform}) of the initial value problem (\ref{FRBE}) and (\ref{Icond}) admits the following spectral representation, see \cite{Anh2003},
\begin{equation}\label{P2sol}
    u(t,x) = \int_{\mathbb{R}} e^{ix \lambda} E_\beta(-\mu |\lambda|^{\alpha} (1+|\lambda|^2)^{\gamma/2} t^\beta) Z(d\lambda).
\end{equation}

The covariance function of the solution field $u(t, x)$ is
\begin{align*}
    \mathrm{Cov}(u(t, x), u(t', x')) &= \int_{\mathbb{R}} e^{i(x-x') \lambda}
 E_\beta(-\mu |\lambda|^{\alpha} (1+|\lambda|^2)^{\gamma/2} t^\beta)\\ &\quad \quad\times  E_\beta(-\mu |\lambda|^{\alpha} (1+|\lambda|^2)^{\gamma/2} (t')^\beta)  F(d\lambda).
\end{align*}

To further specify the class of Gaussian random processes $\xi(x),$ $x \in \mathbb R,$ used as the initial condition~\eqref{Icond}, the following assumption is used, see \cite{Alghamdi2024} for more details.

\begin{assumption}\label{Asumptionmain}
    The covariance function (\ref{P2cov}) has the form
\begin{align}\label{P2cov1}
    &r(x)=\sum_{j=0}^{n}\frac{\cos(w_{j}x)}{(1+x^{2})^{\kappa_j/2}}A_{j},\quad x\in\mathbb R,
\end{align}
where $\sum_{j=0}^{n}A_{j}=1,$  $A_j\ge 0,$ $\kappa_j\in(0,1),$ $w_{j}>0$ (except $w_{0}=0$), $ j=0,...,n.$
\end{assumption}
Let us define the next constants
\[C(\bar{\kappa},\bar{w},\bar{A}):=\sum_{j=1}^{n}
c_1(\kappa_j)A_j
K_{\frac{\kappa_j-1}{2}}(|w_j|)
|w_j|^{\frac{\kappa_j-1}{2}},\]
\[
C(\kappa_0,A_0)
:=
\frac{A_0}
{\Gamma\!\left({\kappa_0}\right)\cos(\kappa_0\pi/2)},
\]
where $c_1(\kappa_j):=2^{\mathbf{1}_{\{0\}}(j)}{2^{\frac{1-\kappa_j}{2}}}/\left({\sqrt{\pi}{\Gamma\left({\kappa_j}/{2} \right)}}\right),$  $\bar{\kappa}=( \kappa_1,...,\kappa_n),$ $\bar{w}=(w_1,...,w_n),$ and $\bar{A}= (A_1,..., A_n)$.

The covariance function in \eqref{P2cov1} is non-integrable and exhibits an oscillating behaviour, which corresponds to the cyclic long-range dependence scenario.
It follows from (\ref{P2cov1}) that the corresponding spectral density has the representation
\begin{align}
f_{\xi}(\lambda)
& :=\sum_{j=1}^{n}\frac{c_1(\kappa_j)}{2}A_{j}
\Big(
K_{\frac{\kappa_j-1}{2}}\left(|\lambda+w_j|\right)|\lambda+w_j|^{\frac{\kappa_j-1}{2}}\\
&\quad\quad +
K_{\frac{\kappa_j-1}{2}}\left(|\lambda-w_j|\right)|\lambda-w_j|^{\frac{\kappa_j-1}{2}}
\Big) +\,\frac{c_{1}(\kappa_0)}{2}A_{0}
K_{\frac{\kappa_{0}-1}{2}}(|\lambda|)|\lambda|^{\frac{\kappa_{0}-1}{2}},\nonumber
\end{align}
where the modified Bessel function of the second kind  $K_\nu(\cdot)$ is defined by \[K_\nu(z):=\frac{1}{2}\int_{0}^{+\infty}s^{\nu-1}\exp\left(-\frac{1}{2}\left(s+\frac{1}{s} \right)z\right)ds,\quad z\geq 0,\quad \nu\in \mathbb {R}.\]

\section{Multiscaling limit theorems for filtered random fields} \label{MultiscalingFRF}

The publications~\cite{Alghamdi2025, Alghamdi2024} showed that, for the Cauchy problem with random initial conditions, there are cases where multiscaling limits do not exist. They also proposed kernel smoothing as a way to obtain non-degenerate limits in the case of higher-order heat equations. The following results apply this approach to the fractional Riesz-Bessel equations studied in~\cite{Alghamdi2025II}.

Consider a spatial kernel $g(x_1,x),$ $x_1, x\in \mathbb R,$ and define the centered spatially averaged random field
\begin{equation}\label{kernsmooth}
U_\varepsilon^{g}(t,x)
:=
\varepsilon^{\rho_1}
\left(
\int_{\mathbb{R}} g(x_1,x)\,
u\!\left(\frac{t}{\varepsilon^{\rho_2}},\frac{x_1}{\varepsilon^{\rho_3}}\right)\,dx_1
-
\mathbb E \int_{\mathbb{R}} g(x_1,x)\,
u\!\left(\frac{t}{\varepsilon^{\rho_2}},\frac{x_1}{\varepsilon^{\rho_3}}\right)\,dx_1
\right),
\end{equation}
where $\rho_i \in \mathbb R,$ $i=1,2,3,$ are some scaling parameters. As the initial condition is given by a zero-mean Gaussian random field, the field $u(t,x)$ is also zero-mean, and the expectation term in (\ref{kernsmooth}) vanishes.

We will use real-valued spatial kernel functions $g(x_1,x),$ $x_1, x\in\mathbb{R}$, such that for all $x$ it holds $g(\cdot,x) \in L_2(\mathbb{R})$. Their Fourier transforms with respect to the first argument are given by
\[
\widehat{g}(\lambda,x)
:=
\int_{\mathbb{R}} e^{-i\lambda x_1} g(x_1,x)\,dx_1.
\]
Since the Fourier transform of these kernel functions is taken with respect to the first variable $z$, we will use the notation $\|g(z,x)\|_{H_z^{s}}$ for the corresponding Sobolev norm in the $z$-variable.

\begin{thm}\label{ThmP2.1}
Consider the random field $u(t,x)$, $t>0$, $x\in\mathbb{R}$, defined by \eqref{FRBE} and the random initial condition given by \eqref{Icond}, satisfying Assumption~{\rm\ref{Asumptionmain}} with $A_0=0$.

For  $\rho_1=-{\rho_3}/{2}$, $\rho_2=\alpha\rho_3/\beta$, and  $\rho_3>0$, when $\varepsilon\to 0$, the finite-dimensional distributions of $U_\varepsilon^{g}(t,x)$ converge weakly to those of a zero-mean Gaussian random field~$U_0^{g}(t,x)$ given by
\begin{equation} \label{TH23.1A0=0}
U_0^{g}(t,x)
:=
\sqrt{
C(\bar{\kappa},\bar{w},\bar{A})}
\int_{\mathbb{R}}
\widehat{g}(\lambda,x)\,
E_\beta\!\left(-\mu t^\beta |\lambda|^\alpha\right)\,
W(d\lambda).
\end{equation}

The limit random field $U_0^{g}(t,x)$ has the covariance function
\begin{align}
\operatorname{Cov}\bigl(U_0^g(t,x),U_0^g(t',x')\bigr)
&=
C(\bar{\kappa},\bar{w},\bar{A})
\int_{\mathbb{R}}
\widehat{g}(\lambda,x)\overline{\widehat{g}(\lambda,x')}\nonumber \\ &\quad \quad\times
E_\beta(-\mu t^\beta |\lambda|^\alpha)
E_\beta(-\mu (t')^\beta |\lambda|^\alpha)\,d\lambda.
 \label{CovTH3.1a0=0}
\end{align}
\end{thm}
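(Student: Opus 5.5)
Since $U_\varepsilon^g$ is a linear functional of the Gaussian field $\xi$, it is a zero-mean Gaussian field. Finite-dimensional convergence therefore reduces to convergence of covariances: for Gaussian vectors, convergence of covariance matrices gives weak convergence of the finite-dimensional distributions. The plan is to write the spectral representation of $U_\varepsilon^g$, change variables so that the $\varepsilon$-dependence moves into the spectral density, and pass to the limit under the integral.

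First, substitute \eqref{P2sol} into \eqref{kernsmooth} and exchange the $dx_1$ and $Z(d\lambda)$ integrals; this is justified by a stochastic Fubini argument in $L_2(\Omega)$, using $g(\cdot,x)\in L_2$ together with boundedness of $E_\beta$. This gives
\[
U_\varepsilon^g(t,x)=\varepsilon^{\rho_1}\int_{\mathbb R}\widehat g\!\left(-\lambda\varepsilon^{-\rho_3},x\right)E_\beta\!\left(-\mu|\lambda|^\alpha(1+\lambda^2)^{\gamma/2}t^\beta\varepsilon^{-\rho_2\beta}\right)\sqrt{f_\xi(\lambda)}\,W(d\lambda).
\]
The sign of the frequency is harmless because $f_\xi$ is even and $W$ is symmetric. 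Next substitute $\lambda=\varepsilon^{\rho_3}\nu$. By self-similarity of white noise, $W(\varepsilon^{\rho_3}d\nu)\overset{d}{=}\varepsilon^{\rho_3/2}W(d\nu)$ jointly in all $(t,x)$, and with $\rho_1=-\rho_3/2$ the prefactors cancel. Using $\rho_2\beta=\alpha\rho_3$, the Mittag-Leffler argument becomes $\mu t^\beta|\nu|^\alpha(1+\varepsilon^{2\rho_3}\nu^2)^{\gamma/2}$. The covariance is then
\[
\int_{\mathbb R}\widehat g(\nu,x)\overline{\widehat g(\nu,x')}\,E_\beta\bigl(\cdot_{t,\varepsilon}\bigr)E_\beta\bigl(\cdot_{t',\varepsilon}\bigr)f_\xi(\varepsilon^{\rho_3}\nu)\,d\nu .
\]

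The pointwise limit is immediate. Since $A_0=0$, every singularity of $f_\xi$ sits at $\pm w_j\neq0$, so $f_\xi$ is continuous at $0$. At the origin each Bessel term equals $K_{(\kappa_j-1)/2}(w_j)w_j^{(\kappa_j-1)/2}$, and summing the two symmetric terms with weight $c_1A_j/2$ gives $f_\xi(0)=C(\bar\kappa,\bar w,\bar A)$. Also $(1+\varepsilon^{2\rho_3}\nu^2)^{\gamma/2}\to1$, and continuity of $E_\beta$ yields convergence of the integrand to that of \eqref{CovTH3.1a0=0}.

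The main obstacle is domination. The function $f_\xi$ is not bounded: it blows up like $|\lambda\mp w_j|^{\kappa_j-1}$ near $\pm w_j$, and in the rescaled variable these points sit at $\nu=\pm w_j\varepsilon^{-\rho_3}\to\infty$. I would split the integral into $|\nu|\le\delta\varepsilon^{-\rho_3}$ and its complement, with $\delta<\min_j w_j/2$. On the first region $f_\xi(\varepsilon^{\rho_3}\nu)\le C$, so the integrand is dominated by $C|\widehat g(\nu,x)||\widehat g(\nu,x')|\in L_1$, and dominated convergence applies. On the complement, bound the Mittag-Leffler factors by $1$ and estimate $|\widehat g|^2$ via the Sobolev assumption: I would assume $\|g(\cdot,x)\|_{H^s_z}<\infty$ for some $s>0$, or simply $\widehat g(\cdot,x)\in L_\infty$ with decay. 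Changing back to $\lambda$, the tail contribution is at most
\[
\varepsilon^{-\rho_3}\int_{|\lambda|>\delta}|\widehat g(\lambda\varepsilon^{-\rho_3},x)|^2 f_\xi(\lambda)\,d\lambda .
\]
Controlling this uses the integrable singularities of $f_\xi$ (since $\kappa_j-1>-1$) together with the decay of $\widehat g$ at frequency $\sim\varepsilon^{-\rho_3}$. Concretely, if $|\widehat g(\nu,x)|\le C(1+|\nu|)^{-s}$ with $s>1/2$, the tail is $O(\varepsilon^{\rho_3(2s-1)})\to0$. Absent pointwise decay, I would use a Hölder split around each $w_j$ with the $H^s$ norm.

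Finally, the limit covariance is finite and positive semidefinite, because it is an $L_2$ inner product. The field \eqref{TH23.1A0=0} is well defined with exactly this covariance, which completes the proof.
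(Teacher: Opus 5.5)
Your proposal follows the same route as the paper. The shared steps are:
\begin{itemize}
\item spectral representation with stochastic Fubini;
\item the substitution $\lambda=\varepsilon^{\rho_3}\nu$ with the self-similarity of $W$;
\item the choice $\rho_1=-\rho_3/2$, $\rho_2\beta=\alpha\rho_3$;
\item the pointwise limit $f_\xi(\varepsilon^{\rho_3}\nu)\to f_\xi(0)=C(\bar\kappa,\bar w,\bar A)$;
\item dominated convergence.
\end{itemize}
The paper realises $U_\varepsilon^g$ and $U_0^g$ on one white noise and proves $R_\varepsilon(t,x)=\mathbb E(U_\varepsilon^g-U_0^g)^2\to0$ before applying Cram\'er--Wold. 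You prove convergence of covariances instead. For centred Gaussian fields the two are interchangeable and need the same control of the integrand.

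\textbf{Domination.} This is the real difference, and here you are more careful than the paper. The paper asserts that $\widehat g(\cdot,x)\in L_2(\mathbb R)$ yields an integrable majorant, and defers the details to the unsmoothed case.

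As you observe, the singularities of $f_\xi(\varepsilon^{\rho_3}\nu)$ sit at $\nu=\pm w_j\varepsilon^{-\rho_3}$ and sweep through all large frequencies. For any $j$ with $A_j>0$ one has $\sup_{\varepsilon<\varepsilon_0}f_\xi(\varepsilon^{\rho_3}\nu)=+\infty$ for every $|\nu|>w_j\varepsilon_0^{-\rho_3}$. Hence no $\varepsilon$-free majorant exists unless $\widehat g(\cdot,x)$ vanishes a.e.\ at high frequencies. With $\varepsilon$-dependent majorants one still has to show that their integrals converge, which is exactly your tail estimate.

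Under $\widehat g(\cdot,x)\in L_2$ alone, even $\operatorname{Var}U_\varepsilon^g(t,x)$ can be infinite. Take $|\widehat g|^2\sim|\nu-\nu_0|^{-1/2}$ near $\nu_0=w_j\varepsilon^{-\rho_3}$, with $A_j>0$ and $\kappa_j\le1/2$. So the stochastic Fubini step also needs more than both texts give: both justify it only by $g(\cdot,x)\in L_2$ and $E_\beta\le1$.

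\textbf{Your extra hypothesis.} The decay assumption you add is therefore not in the statement, but a hypothesis of this kind is genuinely needed, and it holds for the Mat\'ern kernels. Your split at $|\nu|=\delta\varepsilon^{-\rho_3}$ is correct. So is the tail bound $O(\varepsilon^{\rho_3(2s-1)})$ under $|\widehat g(\nu,x)|\le C(1+|\nu|)^{-s}$ with $s>1/2$.

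You can sharpen this by keeping the Mittag-Leffler factors. By \eqref{upperch5}, $E_\beta^2\le C|\nu|^{-2\alpha}$ on the tail, which gives $O(\varepsilon^{\rho_3(2s+2\alpha-1)})$, so $s+\alpha>1/2$ suffices. For merely bounded $\widehat g$ (e.g.\ $g(\cdot,x)\in L_1$) this estimate again requires $\alpha>1/2$. So the paper's remark that this condition can be dropped relies on decay of $\widehat g$ that the statement does not assume.

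\textbf{The fallback does not work.} Your alternative of replacing pointwise decay by an $H^s_z$ bound plus a H\"older split around the $w_j$ fails. An $H^s$ norm controls how much $L_2$-mass $\widehat g$ carries beyond a given frequency, not how sharply that mass is concentrated. The H\"older split, however, needs $\widehat g(\cdot,x)$ to lie locally in some $L_p$ with $p>2$.

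Concretely, place narrow spikes of $|\widehat g|^2$ at $w_j\varepsilon_k^{-\rho_3}$ with summable $H^s$-mass. This makes the variance blow up along $\varepsilon_k\to0$, for every $s$. What the argument actually needs is a pointwise bound on $\widehat g(\cdot,x)$ at high frequencies, as in your main assumption.
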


\begin{proof}
Using the spectral representation~\eqref{P2sol} of the solution and substituting $t$ with $t/\varepsilon^{\rho_2}$  and $x$ with $x_1/\varepsilon^{\rho_3}$, one gets
\begin{equation} \label{utx}
u\!\left(\frac{t}{\varepsilon^{\rho_2}},\frac{x_1}{\varepsilon^{\rho_3}}\right)
=
\int_{\mathbb R}
e^{i\frac{x_1}{\varepsilon^{\rho_3}}\lambda}
E_\beta\!\left(
-\mu |\lambda|^\alpha (1+|\lambda|^2)^{\gamma/2}
\left(\frac{t}{\varepsilon^{\rho_2}}\right)^\beta
\right)
\sqrt{f(\lambda)}\,W(d\lambda).
\end{equation}

The  application of kernel smoothing~\eqref{kernsmooth} to~\eqref{utx} results in
\begin{align*}
U_\varepsilon^g(t,x)
&=
\varepsilon^{\rho_1}
\int_{\mathbb R}
\int_{\mathbb R}
g(x_1,x)
e^{i\frac{x_1}{\varepsilon^{\rho_3}}\lambda}
E_\beta\!\left(
-\mu |\lambda|^\alpha (1+|\lambda|^2)^{\gamma/2}
t^\beta \varepsilon^{-\rho_2\beta}
\right)\\ &\quad \quad \quad \quad\quad \quad\times \sqrt{f(\lambda)}\,W(d\lambda)\,dx_1.
\end{align*}

As the Mittag-Leffler function is bounded and
$g(x_1, x)\sqrt{f(\lambda)} \in L_2(\mathbb{R}^2)$ for each fixed $x$, the stochastic Fubini theorem can be applied,
allowing to interchange the order of integration. Then, applying the change of variable $\tilde{\lambda}:=\lambda/\varepsilon^{\rho_3}$ and using the scaling property of Gaussian white noise
$W(\varepsilon^{\rho_3}d\tilde{\lambda})
\stackrel{d}{=}
\varepsilon^{\rho_3/2}W(d\tilde{\lambda}),$ we obtain
\begin{align*}
U_\varepsilon^g(t,x)
&=
\varepsilon^{\rho_1+\rho_3/2}
\int_{\mathbb R}
\widehat g(\tilde{\lambda},x)
E_\beta\!\left(
-\mu |\tilde{\lambda}|^\alpha
\left(1+|\tilde{\lambda}|^2\varepsilon^{2\rho_3}\right)^{\gamma/2}
t^\beta \varepsilon^{\alpha\rho_3-\rho_2\beta}
\right) \\ &\quad \quad\quad \quad\quad \quad\times
\sqrt{f(\tilde{\lambda}\varepsilon^{\rho_3})}\,W(d\tilde{\lambda}).
\end{align*}

The values of the scaling parameters for which the normalisation factor is constant and the Mittag-Leffler function remains nondegenerate in the limit are
$\rho_1=-{\rho_3}/{2},$ $\rho_3>0,$ and $\alpha\rho_3-\rho_2\beta=0.$ In this case
\[
U_\varepsilon^g(t,x)
=
\int_{\mathbb R}
\widehat g(\tilde{\lambda},x)
E_\beta\!\left(
-\mu|\tilde{\lambda}|^\alpha \left(1+|\tilde{\lambda}|^2\varepsilon^{2\rho_3}\right)^{\gamma/2} t^\beta
\right)
\sqrt{f(\tilde{\lambda}\varepsilon^{\rho_3})}\,W(d\tilde{\lambda}).
\]

Since $\rho_3>0$, if $\varepsilon\to 0,$ one has
\[
E_\beta\!\left(
-\mu|\tilde{\lambda}|^\alpha \left(1+|\tilde{\lambda}|^2\varepsilon^{2\rho_3}\right)^{\gamma/2} t^\beta
\right)
\longrightarrow
E_\beta\!\left(-\mu t^\beta |\tilde{\lambda}|^\alpha\right).
\]

It suggests that the limit random field is given by
\[
U_0^g(t,x)
=
\sqrt{C(\bar{\kappa},\bar{w},\bar{A})}
\int_{\mathbb R}
\widehat g(\lambda,x)
E_\beta\!\left(-\mu t^\beta |\lambda|^\alpha\right)
W(d\lambda).
\]

To justify the convergence, consider $R(t,x):=\mathbb{E}\left(U_\varepsilon^g(t,x)-U_0^g(t,x)\right)^2.$

Let us denote
\[
Q_\varepsilon(\tilde{\lambda})
:=
\sum_{j=1}^{n}\frac{c_1(\kappa_j)}{2}A_j
\Big(
K_{\frac{\kappa_j-1}{2}}\bigl(|\tilde{\lambda}\varepsilon^{\rho_3}+w_j|\bigr)
|\tilde{\lambda}\varepsilon^{\rho_3}+w_j|^{\frac{\kappa_j-1}{2}}\]
\[
+\,
K_{\frac{\kappa_j-1}{2}}\bigl(|\tilde{\lambda}\varepsilon^{\rho_3}-w_j|\bigr)
|\tilde{\lambda}\varepsilon^{\rho_3}-w_j|^{\frac{\kappa_j-1}{2}}
\Big).
\]

By the It\^o isometry,
\[
R_\varepsilon(t,x)
=
\int_{\mathbb R}
|\widehat g(\tilde{\lambda},x)|^2
E_\beta^2\!\left(-\mu t^\beta |\tilde{\lambda}|^\alpha\right)
\left(
\frac{
E_\beta\!\left(
-\mu |\tilde{\lambda}|^\alpha
\left(1+|\tilde{\lambda}|^2\varepsilon^{2\rho_3}\right)^{\gamma/2}
t^\beta
\right)
}{
E_\beta\!\left(-\mu t^\beta |\tilde{\lambda}|^\alpha\right)
}\right.\]
\[\left. \times\,
\sqrt{Q_\varepsilon(\tilde{\lambda})}
-
\sqrt{C(\bar{\kappa},\bar{w},\bar{A})}
\right)^2
d\tilde{\lambda}.
\]

Since
\[\frac{
E_\beta\!\left(
-\mu |\tilde{\lambda}|^\alpha
\left(1+|\tilde{\lambda}|^2\varepsilon^{2\rho_3}\right)^{\gamma/2}
t^\beta
\right)
}{
E_\beta\!\left(-\mu t^\beta |\tilde{\lambda}|^\alpha\right)
}
\to 1
\quad \text{and} \quad
Q_\varepsilon(\tilde{\lambda}) \to C(\bar{\kappa},\bar{w},\bar{A}),\]
when $\varepsilon\to 0,$ the integrand converges pointwise to zero.

By the properties of the Fourier transforms, $\widehat g(\cdot,x)\in L_2(\mathbb R)$ and the integrand is dominated by an integrable function. Hence, one can apply the generalised Lebesgue dominated convergence theorem, and the remaining part of the proof uses the same arguments as in the proof of \cite[Theorem 3.1]{Alghamdi2025II}.

We have shown that $R_{\varepsilon}(t,x)\to 0$ as $\varepsilon\to 0$. Hence, for arbitrary $a_1,\dots,a_m\in\mathbb{R}$,
\[
\lim_{\varepsilon\to 0}
\mathbb{E}
\left(
\sum_{j=1}^m a_j
\bigl(
U_\varepsilon^g(t_j,x_j)-U_0^g(t_j,x_j)
\bigr)
\right)^2
=0.
\]
Therefore, the Cram\'{e}r-Wold theorem gives the convergence of finite-dimensional distributions.

Because $\widehat g(\cdot,x)\in L_2(\mathbb R)$ and, by (\ref{upperch5}), the function~$E_{\beta}^{2}(\cdot)$ is bounded, the following integral is finite for all $\alpha \ge 0$
\[\int_{\mathbb R}|\widehat g(\lambda,x)|^2 E_{\beta}^{2}(-\mu t^{\beta}|\lambda|^{\alpha})d\lambda<\infty.
\]
Therefore, compared to \cite[Theorem 3.1]{Alghamdi2025II}, the condition $\alpha>1/2$ is not required.

The covariance representation~(\ref{CovTH3.1a0=0}) follows directly from~(\ref{TH23.1A0=0}) by the orthogonality property of the random measure $W(\cdot)$.
\end{proof}
\begin{remark}\label{Rmatern}
 The Mat\'ern kernel is a widely used class of continuous kernels in many recent applications, see, for example, \cite{ Leonenko2022, Porcu2024} and the references therein.
   For the parameters $\nu > 0$ and $a > 0$ it is defined by
    \begin{equation}\label{mkernel}
        h(x)
       : = \frac{ \left( a{|x|} \right)^\nu
        K_\nu\!\left( a{|x|} \right)}{2^{\nu-1}\Gamma(\nu)},
    \end{equation}
    where $K_\nu(\cdot)$ is the modified Bessel function of the second kind.

Its Fourier transform is given by
    \begin{equation}\label{FTmkernel}
        \widehat{h}(\lambda)
        = \frac{2\sqrt{\pi}a^{2\nu}\,\Gamma\!\left(\nu + \tfrac{1}{2}\right)}{\Gamma(\nu)} \,
        \left( {a^2} + \lambda^2 \right)^{-\left(\nu + \tfrac{1}{2}\right)}.
    \end{equation}

Some important special cases of the simplified expressions of~(\ref{mkernel}) and~(\ref{FTmkernel}), that are frequently used in applications, are:
    \begin{itemize}
        \item $\nu = \tfrac{1}{2}$:
$ h(x) = e^{-a|x| },$ $\widehat{h}(\lambda) = \frac{2a}{a ^{2} + \lambda^2}
$;
 \item $\nu = \tfrac{3}{2}$:
$h(x) = \left(1 + a{|x|}\right)e^{-a|x| },
$ $\widehat{h}(\lambda) = \frac{4a^3}{(a^2+\lambda^2)^2}$;
  \item $\nu = \tfrac{5}{2}$:
$h(x) = \left(1 + a{|x|} + \frac{(ax)^2}{3}\right)e^{-a|x|},
$ $\widehat{h}(\lambda) = \frac{16a^5}{3(a^2+\lambda^2)^3}.$
\end{itemize}
When the Mat\'ern kernels are used as covariance functions, the parameter $\nu$
controls the smoothness of the corresponding stochastic processes.
The case $\nu=1/2$ produces rougher trajectories, whereas the cases
$\nu=3/2$ and $\nu=5/2$ yield increasingly smoother sample paths.

Consider the kernel function used in  (\ref{kernsmooth}) of the form \[g(x_1,x):=h(x_1-x)= \frac{\left( {a} {|x_1-x|}\right)^\nu
K_\nu\!\left( {a} {|x_1-x|} \right)}{2^{\nu-1}\Gamma(\nu)}
.\]
 Then, for any $x \in \mathbb{R}$, by the properties of the Fourier transform,
\begin{equation} \label{MFourier}\widehat{g}(\lambda,x)=\widehat{h}(\lambda-x)= e^{i\lambda x}\,\widehat{h}(\lambda)=e^{i\lambda x}\frac{2\sqrt{\pi}a^{2\nu}\,\Gamma\!\left(\nu + \tfrac{1}{2}\right)}{\Gamma(\nu)} \,
\left( {a^2} + \lambda^2 \right)^{-\left(\nu + \tfrac{1}{2}\right)}.
\end{equation}

Since the fields are real-valued and the integrand in~(\ref{TH23.1A0=0}) is symmetric with respect to~$\lambda$, the complex exponential term $e^{i\lambda x}$ in the expression above for $\widehat{g}(\lambda,x)$ can be replaced by $\cos(\lambda x)$ without altering the limit field and its covariance structure.
\end{remark}

\begin{example} \label{numexP21}
This numerical example illustrates Theorem~\ref{ThmP2.1}. The parameter vectors were chosen as $\bar{\kappa}=(0.2,0.6,0.8)$ and $\bar{w}=(0.8,1.2,2.0)$, with weights $\bar{A}=(0.4,0.35,0.25)$ satisfying the normalization condition $\sum_{j=1}^{n}A_j=1$, since $A_0=0$. The parameters of the FRBE were selected as $\alpha=1$, $\beta=0.5$, and $\mu=1$.

The random field $U_0^g(t,x)$ was defined by using its spectral representation~\eqref{TH23.1A0=0}, where the stochastic integral was approximated by a Riemann-type sum over a uniform symmetric grid in the frequency domain:
\[
U_0^g(t,x) \approx
\sqrt{C(\bar{\kappa},\bar{w},\bar{A})} \sum_{j=-N}^{N}
\widehat{g}(\lambda_j,x)\,
E_{\beta}\!\left(-\mu t^{\beta}|\lambda_j|^{\alpha}\right)
\, W(\Delta \lambda_j),
\]
with $\lambda_j = j\Delta,$ $j=-N,\ldots,N.$

The selected discretisation step $\Delta=0.01$ and the truncation level $N=1000$
correspond to the approximation of the integrals over the interval
$[-10,10]$. This range was chosen because, outside this interval, the integrants are sufficiently small and make a negligible contribution to the integrals. The Gaussian increments
$W(\Delta\lambda_j)$ are taken to be independent normally distributed random
variables,
$W(\Delta\lambda_j)\sim N(0,\Delta).
$
They are set symmetrically with respect to the origin due to the real-valued initial-condition random field. The random field
$U_0^g(t,x)$ is then simulated in the spatio-temporal domain
$x\in[0,40],$ $t\in[0,2].$

For $a=1$, and for the Mat\'ern smoothness parameters $\nu=1/2$ and
$\nu=3/2$, the kernels in Remark~\ref{Rmatern} reduce, respectively, to
\begin{equation} \label{exampv=1/2}
g(x_1,x)=e^{-|x-x_1|},
\qquad
\widehat{g}(\lambda,x)
=
\frac{2\cos(\lambda x)}{1+\lambda^2},
\end{equation}
and
\begin{equation} \label{examplev=3/2}
g(x_1,x)
=
\left(1+|x-x_1|\right)e^{-|x-x_1|},
\qquad
\widehat{g}(\lambda,x)
=
\frac{4\cos(\lambda x)}{(1+\lambda^2)^2}.
\end{equation}

Figure~\ref{fig:P2th3.1a} presents a realisation of $U_0^g(t,x)$ for
$\nu=1/2$. Similar plots were obtained for $\nu=3/2$ and $\nu=5/2$, which exhibited smoothed realisations,
but they are omitted for brevity. The field varies mainly along the spatial
direction $x$, exhibiting repeated ridge-like structures. For small values
of $t$, the field exhibits oscillations of larger magnitudes.
As $t$ increases, the Mittag--Leffler factor
progressively damps these oscillations, leading to a gradual smoothing of the
same spatial pattern.

As the limit random field is non-stationary in both space and time, several plots are provided to illustrate the behaviour of its covariance function at different spatial and temporal locations. The covariance function
$\mathrm{Cov}\bigl(U_0^g(t,x),U_0^g(t',x')\bigr)
$
shown in Figure~\ref{fig:P2th3.1b} is obtained by approximating the integral
in~\eqref{CovTH3.1a0=0} at the fixed point $(t,x)=(1,20)$. As
expected, the covariance attains its maximum at this point and
decreases as either the temporal or spatial separation increases, although the decay is considerably slower in time.

Figure~\ref{fig:P2th3.1c} presents examples of spatial covariance functions
plotted for $x'\in[-30,30]$, with the temporal variables fixed at
$t=t'=1$. Two spatial reference locations, $x=1$ and $x=5$, and two
Mat\'ern smoothness parameters, $\nu=1/2$ and $\nu=3/2$, were used.
In both cases, the covariance attains its largest values near the selected
point $x$ and decreases with~$x'$ as the spatial separation between the two points
increases. The temporal covariance shown in Figure~\ref{fig:P2th3.1d} was obtained by
fixing $t=0.1$ and plotting the covariance as a function of
$t'\in[0,100]$. The covariance takes high values when the temporal separation
between $t$ and $t'$ is small and gradually decreases as the separation
increases. The slow temporal decay of the covariance suggests possible
long-memory behaviour. As expected, the covariance corresponding
to the closer spatial locations $(x,x')=(1,1)$ is larger than that for the
more separated locations $(x,x')=(1,5)$.\\[-5mm]
\begin{figure}[htbp]
    \centering
    \begin{subfigure}{0.49\textwidth}
        \centering
        \includegraphics[width=\linewidth, height=0.8\linewidth, trim=15mm  10mm 5mm 35mm,clip]{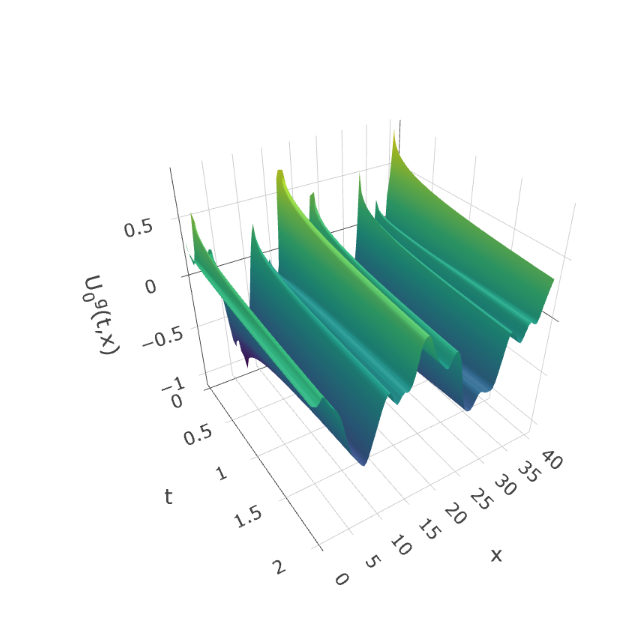}
        \caption{Realization of $U_0^g(t,x)$}
        \label{fig:P2th3.1a}
    \end{subfigure}\hfill
    \begin{subfigure}{0.49\textwidth}
        \centering        \includegraphics[width=\linewidth,height=0.8\linewidth, trim=10mm  10mm 5mm 35mm,clip]{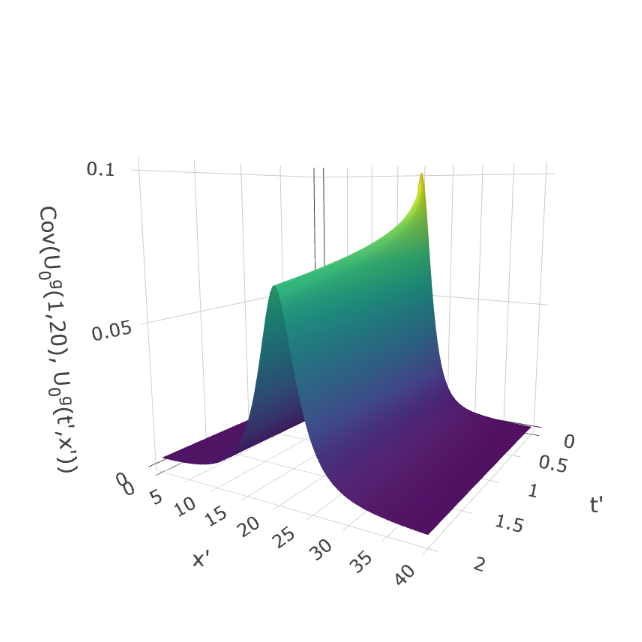}
        \caption{Covariance function of $U_0^g(t,x)$}
        \label{fig:P2th3.1b}
    \end{subfigure}
    \begin{subfigure}{0.48\textwidth}
        \centering
        \includegraphics[width=\linewidth, height=0.8\linewidth]{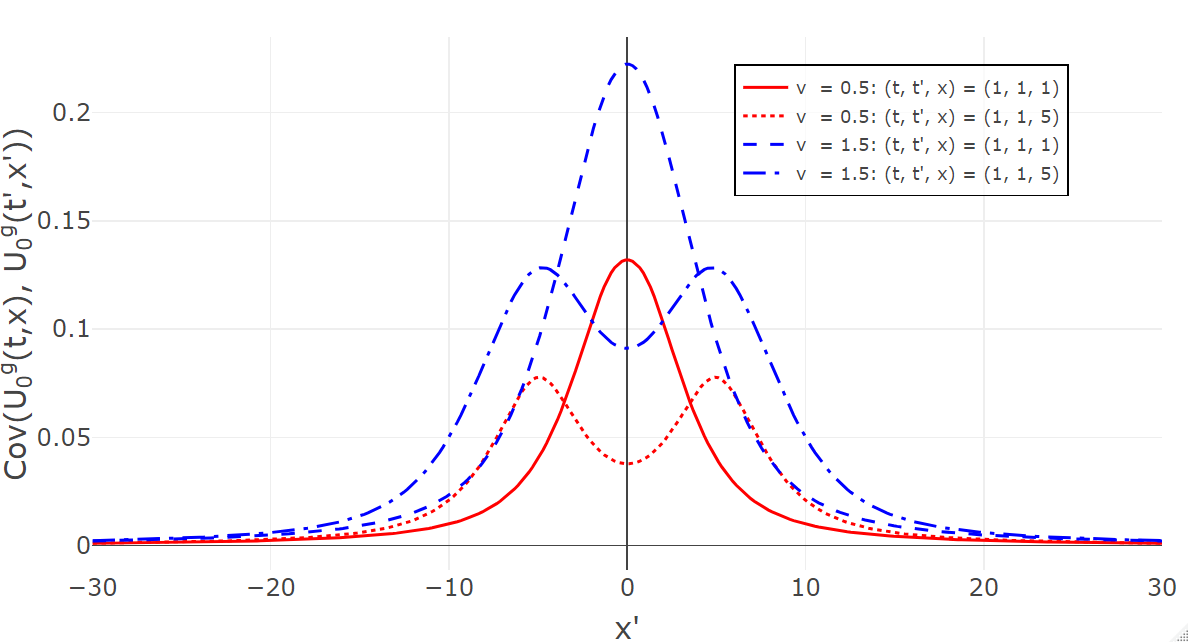}
        \caption{Examples of spatial covariance functions, for fixed $t$, $t'$ and $\nu=0.5$ and $1.5$}
        \label{fig:P2th3.1c}
    \end{subfigure}\hfill
    \begin{subfigure}{0.48\textwidth}
        \centering        \includegraphics[width=\linewidth, height=0.8\linewidth]{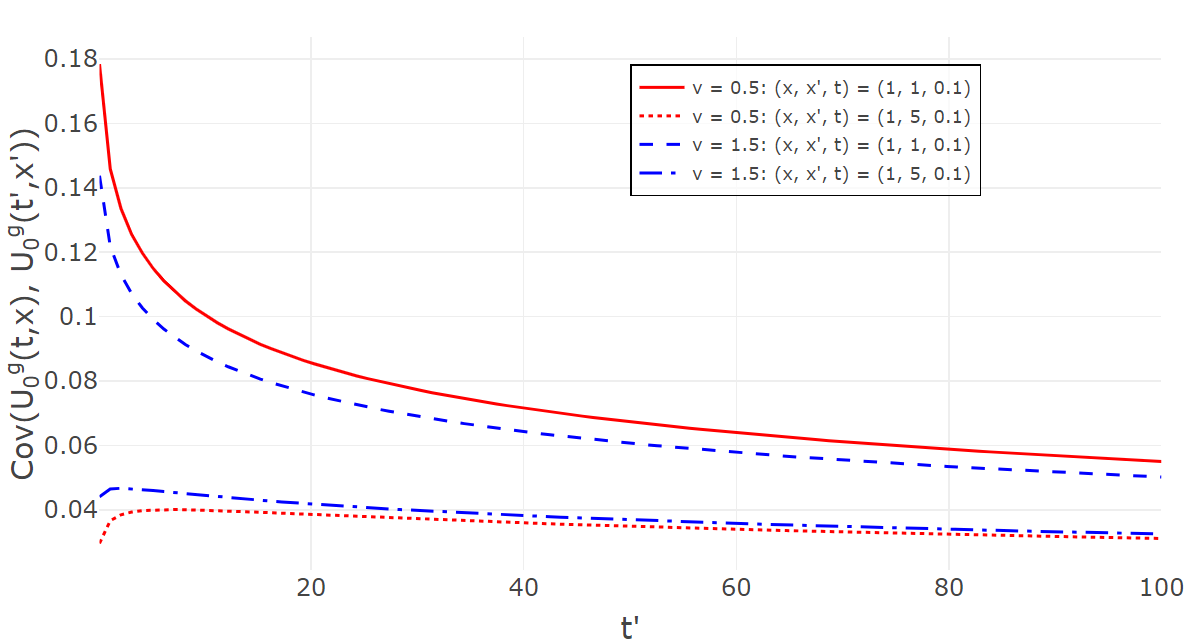}
        \caption{Examples of temporal covariance functions, for fixed $x,$ $x'$ and $\nu=0.5$ and $1.5$}
        \label{fig:P2th3.1d}
    \end{subfigure}
    \caption{Example of the limit field $U_0^g(t,x)$ from Theorem~\ref{ThmP2.1}}
    \label{fig:P2th3.1}
\end{figure}
   \end{example}
\begin{thm}\label{ThmP2.2}
Consider the random field $u(t,x)$, $t>0$, $x\in\mathbb{R}$, defined by \eqref{FRBE} and the random initial condition given by \eqref{Icond}, satisfying Assumption~{\rm\ref{Asumptionmain}} with $A_0 \ne 0$.

For the scaling parameters $\rho_1=-{\kappa_0\rho_3}/{2},$ $\rho_2=\alpha\rho_3/\beta,$ and $\rho_3>0$, when $\varepsilon\to 0$, the finite-dimensional distributions of $U_\varepsilon^{g}(t,x)$ converge weakly to those of a zero-mean Gaussian random field $\tilde{U}_0^{g}(t,x)$ given by
\begin{equation} \label{RTH3.2A0>0}
\tilde{U}_0^{g}(t,x)
:=
\sqrt{C(\kappa_0,A_0)}
\int_{\mathbb{R}}
\widehat{g}(\lambda,x)\,
E_\beta\!\left(-\mu t^\beta |\lambda|^\alpha\right)
|\lambda|^{\frac{\kappa_0-1}{2}}
\,W(d\lambda).
\end{equation}

The limit random field $\tilde{U}_0^{g}(t,x)$ has the covariance function
\begin{align}
\operatorname{Cov}\bigl(\tilde{U}_0^g(t,x),\tilde{U}_0^g(t',x')\bigr)
&=
 C(\kappa_0,A_0)
\int_{\mathbb{R}}
\widehat{g}(\lambda,x)\overline{\widehat{g}(\lambda,x')}\nonumber\\
&\quad \quad \times
E_\beta(-\mu t^\beta |\lambda|^\alpha)
E_\beta(-\mu (t')^\beta |\lambda|^\alpha)
|\lambda|^{\kappa_0-1}\,d\lambda. \label{covTH3.2A0>0}
\end{align}
\end{thm}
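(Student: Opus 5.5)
We follow the scheme of the proof of Theorem~\ref{ThmP2.1}, with one change: the small-frequency behaviour of $f_\xi$ is now governed by the singular $j=0$ term.

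\textbf{Rescaled spectral representation.} Start from the rescaled spectral representation \eqref{utx} and apply the kernel smoothing \eqref{kernsmooth}. Interchange the order of integration by the stochastic Fubini theorem, as before. Then change variables $\tilde\lambda=\lambda/\varepsilon^{\rho_3}$ and use $W(\varepsilon^{\rho_3}d\tilde\lambda)\stackrel{d}{=}\varepsilon^{\rho_3/2}W(d\tilde\lambda)$. This gives
\[
U_\varepsilon^g(t,x)\stackrel{d}{=}\varepsilon^{\rho_1+\rho_3/2}\int_{\mathbb R}\widehat g(\tilde\lambda,x)E_\beta\bigl(-\mu|\tilde\lambda|^\alpha(1+|\tilde\lambda|^2\varepsilon^{2\rho_3})^{\gamma/2}t^\beta\varepsilon^{\alpha\rho_3-\rho_2\beta}\bigr)\sqrt{f_\xi(\tilde\lambda\varepsilon^{\rho_3})}\,W(d\tilde\lambda),
\]
jointly in finitely many $(t_k,x_k)$, since the same measure $W$ is used throughout.

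\textbf{Choice of scaling.} Split $f_\xi=Q^{(0)}+Q^{(1)}$, where $Q^{(0)}$ is the $j=0$ term and $Q^{(1)}$ collects the cyclic terms $j\ge1$. As $z\to0$ we have $K_\nu(z)\sim\Gamma(|\nu|)2^{|\nu|-1}z^{-|\nu|}$ with $\nu=(\kappa_0-1)/2<0$. Hence
\[
Q^{(0)}(\tilde\lambda\varepsilon^{\rho_3})\sim c\,A_0\,|\tilde\lambda|^{\kappa_0-1}\varepsilon^{\rho_3(\kappa_0-1)},
\]
with $c=\frac{c_1(\kappa_0)}{2}\Gamma(\frac{1-\kappa_0}{2})2^{-(1+\kappa_0)/2}$. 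Using the duplication and reflection formulas, one checks that $cA_0=C(\kappa_0,A_0)$; this is the standard Tauberian constant for $(1+x^2)^{-\kappa_0/2}$. Taking $\rho_1+\rho_3/2+\rho_3(\kappa_0-1)/2=0$ gives $\rho_1=-\kappa_0\rho_3/2$, and taking $\alpha\rho_3=\rho_2\beta$ keeps the Mittag-Leffler argument non-degenerate. With these choices,
\[
U^g_\varepsilon(t,x)\stackrel{d}{=}\int\widehat g\,E_\beta(\cdots)\sqrt{\varepsilon^{\rho_3(1-\kappa_0)}f_\xi(\tilde\lambda\varepsilon^{\rho_3})}\,W(d\tilde\lambda).
\]
Pointwise in $\tilde\lambda\neq0$, the factor $\varepsilon^{\rho_3(1-\kappa_0)}f_\xi(\tilde\lambda\varepsilon^{\rho_3})$ tends to $C(\kappa_0,A_0)|\tilde\lambda|^{\kappa_0-1}$. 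The cyclic part contributes only $\varepsilon^{\rho_3(1-\kappa_0)}Q^{(1)}\to0$, because $Q^{(1)}$ is bounded near $0$ (the $w_j>0$ are fixed). The Mittag-Leffler factor converges as in Theorem~\ref{ThmP2.1}.

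\textbf{$L_2$ convergence.} By the Itô isometry, bound
\[
R_\varepsilon(t,x)=\int|\widehat g(\tilde\lambda,x)|^2\Bigl(E_\beta^{(\varepsilon)}\sqrt{\varepsilon^{\rho_3(1-\kappa_0)}f_\xi(\tilde\lambda\varepsilon^{\rho_3})}-E_\beta\sqrt{C(\kappa_0,A_0)}|\tilde\lambda|^{\frac{\kappa_0-1}{2}}\Bigr)^2d\tilde\lambda
\]
and show $R_\varepsilon\to0$ by dominated convergence. This domination is the main obstacle. The function $z^{|\nu|}K_\nu(z)$ is bounded, so $Q^{(0)}(z)\le C|z|^{\kappa_0-1}$ for all $z$, and therefore $\varepsilon^{\rho_3(1-\kappa_0)}Q^{(0)}(\tilde\lambda\varepsilon^{\rho_3})\le C|\tilde\lambda|^{\kappa_0-1}$ uniformly in $\varepsilon$.

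The cyclic terms are singular at $z=\pm w_j$, i.e.\ at $\tilde\lambda=\pm w_j\varepsilon^{-\rho_3}\to\infty$. Each singularity is locally integrable, of order $|z\mp w_j|^{\kappa_j-1}$, but it moves off to infinity. I would handle these by controlling $\int|\widehat g|^2\varepsilon^{\rho_3(1-\kappa_0)}Q^{(1)}(\tilde\lambda\varepsilon^{\rho_3})d\tilde\lambda$ directly, splitting into $|\tilde\lambda|\le M$ and $|\tilde\lambda|>M$. Near the singular points one needs a local sup bound on $|\widehat g(\cdot,x)|^2$ together with its decay. To supply this, I would assume $\widehat g(\cdot,x)$ is bounded, for instance $g(\cdot,x)\in L_1$ or $g\in H^s_z$ with $s>1/2$, as for Matérn kernels. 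The same arguments as in \cite[Theorem 3.2]{Alghamdi2025II} can also be quoted here.

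The dominating function for the main term is
\[
|\widehat g(\tilde\lambda,x)|^2|\tilde\lambda|^{\kappa_0-1}\le C\,|\widehat g(\tilde\lambda,x)|^2\bigl(|\tilde\lambda|^{\kappa_0-1}\mathbf 1_{|\tilde\lambda|\le1}+1\bigr),
\]
which is integrable since $\kappa_0>0$ and $\widehat g$ is bounded and in $L_2$. This also shows that the limit field \eqref{RTH3.2A0>0} is well defined.

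\textbf{Conclusion.} Once $R_\varepsilon(t,x)\to0$, linear combinations converge in $L_2$, and the Cramér–Wold theorem gives convergence of the finite-dimensional distributions. The covariance \eqref{covTH3.2A0>0} then follows from the orthogonality of $W$.
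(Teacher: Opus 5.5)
Your overall route is the paper's: the same rescaled spectral representation, the same choice $\rho_1=-\kappa_0\rho_3/2$, $\rho_2=\alpha\rho_3/\beta$, the It\^o isometry with (generalised) dominated convergence, and Cram\'er--Wold. Your global bound $\varepsilon^{\rho_3(1-\kappa_0)}Q^{(0)}(\varepsilon^{\rho_3}\tilde\lambda)\le C|\tilde\lambda|^{\kappa_0-1}$ is cleaner than the paper's version. Two steps, however, do not hold as written.

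\textbf{The constant.} Your $c$ is correct, but the identity $cA_0=C(\kappa_0,A_0)$ is false. The duplication and reflection formulas give $c=2^{-\kappa_0}\Gamma(\frac{1-\kappa_0}{2})/(\sqrt{\pi}\,\Gamma(\frac{\kappa_0}{2}))=1/(2\Gamma(\kappa_0)\cos(\kappa_0\pi/2))$, so $cA_0=C(\kappa_0,A_0)/2$. For instance, $\kappa_0=1/2$ gives $c=1/\sqrt{2\pi}$, whereas $C(\kappa_0,A_0)/A_0=\sqrt{2/\pi}$. Your argument therefore yields the limit with $C(\kappa_0,A_0)/2$ in place of $C(\kappa_0,A_0)$. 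The paper's proof simply asserts $f(\lambda)\sim C(\kappa_0,A_0)|\lambda|^{\kappa_0-1}$, so the slip seems to lie in the paper's definition of $C(\kappa_0,A_0)$. By contrast, $C(\bar\kappa,\bar w,\bar A)=f_\xi(0)$ is consistent with the displayed spectral density. Either way, you cannot claim that the check succeeds.

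\textbf{The drifting cyclic singularities.} You correctly identify this issue; the paper only exhibits a majorant near the origin and then cites \cite[Theorem 3.2]{Alghamdi2025II}. But boundedness of $\widehat g(\cdot,x)$ does not control it. Using $|\sqrt{a+b}-\sqrt{a}|\le\sqrt{b}$ and $z=\varepsilon^{\rho_3}\tilde\lambda$, the cyclic part of $R_\varepsilon$ is at most $2\varepsilon^{-\rho_3\kappa_0}\int_{\mathbb R}|\widehat g(z\varepsilon^{-\rho_3},x)|^2E_\beta^2(-\mu\varepsilon^{-\alpha\rho_3}|z|^\alpha(1+z^2)^{\gamma/2}t^\beta)Q^{(1)}(z)\,dz$. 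Away from $z=\pm w_j$ this is $O(\varepsilon^{\rho_3(1-\kappa_0)})$. Near $z=\pm w_j$, however, the factor $\varepsilon^{-\rho_3\kappa_0}$ must be beaten by two things: the size of $\widehat g(\cdot,x)$ at frequencies of order $\varepsilon^{-\rho_3}$, and the Mittag-Leffler factor, which there is of exact order $\varepsilon^{2\alpha\rho_3}$ by \eqref{upperch5}.

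Since you drop $E_\beta$, a bounded $\widehat g$ only gives the bound $O(\varepsilon^{\rho_3(\kappa_j-\kappa_0)})$, which does not tend to zero when $\kappa_j<\kappa_0$. Even keeping $E_\beta$, boundedness is not enough if $2\alpha<\kappa_0-\kappa_j$ for some $j$ with $A_j>0$. In that case take $g(x_1,x)=g_0(x_1-x)$ with $g_0\in L_1\cap L_2$ whose Fourier transform consists of thin nonnegative bumps centred at $\pm w_j\varepsilon_k^{-\rho_3}$ for a sequence $\varepsilon_k\to0$. Then $\operatorname{Var}U^g_{\varepsilon_k}(t,x)\to\infty$, so an extra hypothesis on $g$ is genuinely needed in that regime.

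What closes the step is a decay rate, for instance $|\widehat g(\lambda,x)|^2=O(|\lambda|^{-p})$ as $|\lambda|\to\infty$ with $p>\kappa_0-2\alpha$ (true for Mat\'ern kernels), combined with $E_\beta^2\le C\varepsilon^{2\alpha\rho_3}$ near $z=\pm w_j$. Under the hypothesis $\alpha>\kappa_0/2$ of \cite[Theorem 3.2]{Alghamdi2025II}, which the paper claims to remove, your boundedness assumption would suffice once $E_\beta$ is kept.
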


\begin{proof}

Using the same arguments as in the proof of Theorem~\ref{ThmP2.1}, we get
\begin{align*}
U_\varepsilon^g(t,x)
&=
\varepsilon^{\rho_1+\rho_3/2}
\int_{\mathbb R}
\widehat g(\tilde{\lambda},x)
E_\beta\!\left(
-\mu |\tilde{\lambda}|^\alpha
\left(1+|\tilde{\lambda}|^2\varepsilon^{2\rho_3}\right)^{\gamma/2}
t^\beta \varepsilon^{\alpha\rho_3-\rho_2\beta}
\right) \\ &\quad \quad \quad \quad \quad \quad\times
\sqrt{f(\tilde{\lambda}\varepsilon^{\rho_3})}\,W(d\tilde{\lambda}).
\end{align*}

Since $A_0>0$, the spectral density satisfies
\[
f(\lambda)\sim C(\kappa_0,A_0)\,|\lambda|^{\kappa_0-1},
\qquad \lambda\to0.
\]

Therefore, to obtain a non-trivial limit, one can choose
\[\alpha\rho_3-\rho_2\beta=0, \quad \rho_1=-\frac{\kappa_0\rho_3}{2},\quad \rho_3>0.
\]
Then, the smoothed field can be written as
\[
U_\varepsilon^g(t,x)
=
\int_{\mathbb R}
\widehat g(\tilde\lambda,x)
E_\beta\!\left(
-\mu |\tilde\lambda|^\alpha
(1+|\tilde\lambda|^2\varepsilon^{2\rho_3})^{\gamma/2}
t^\beta
\right)
\sqrt{\frac{f(\varepsilon^{\rho_3}\tilde\lambda)}
{\varepsilon^{\rho_3(\kappa_0-1)}}}
\,W(d\tilde\lambda).
\]

Since $\rho_3>0$, when $\varepsilon \to 0,$ the integrand converges pointwise to
\[
\sqrt{C(\kappa_0,A_0)}\,
\widehat g(\tilde\lambda,x)
E_\beta\!\left(-\mu t^\beta |\tilde\lambda|^\alpha\right)
|\tilde\lambda|^{\frac{\kappa_0-1}{2}},
\]
which suggests that the limit field is
\begin{equation*}
\tilde{U}_0^g(t,x)
=
\sqrt{C(\kappa_0,A_0)}
\int_{\mathbb R}
\widehat g(\lambda,x)
E_\beta\!\left(-\mu t^\beta |\lambda|^\alpha\right)
|\lambda|^{\frac{\kappa_0-1}{2}}
W(d\lambda).
\end{equation*}
As $\widehat g(\cdot,x)\in L_2(\mathbb R)$ and, by (\ref{upperch5}), $E_\beta(-z)\in(0,1],$ $z\ge0,$ this field has a finite variance and, therefore, is correctly defined when $\widehat g(\cdot,x)|\cdot|^{\frac{\kappa_0-1}{2}}\in L_2(\mathbb R).$

Let us justify the convergence
\[
R_\varepsilon(t,x)=\mathbb{E}\bigl(U_\varepsilon^g(t,x)-\tilde{U}_0^g(t,x)\bigr)^2\to 0,\quad \mbox{when}\ \varepsilon \to 0.
\]
By the It\^o isometry,
\[
R_\varepsilon(t,x)
=
\int_{\mathbb R}
|\widehat g(\lambda,x)|^2
\left(
E_\beta\!\left(-\mu |\lambda|^\alpha (1+|\lambda|^2\varepsilon^{2\rho_3})^{\gamma/2} t^\beta\right)
\sqrt{\frac{f(\varepsilon^{\rho_3}\lambda)}{\varepsilon^{\rho_3(\kappa_0-1)}}}\right.\]
\[\left.
-\,
\sqrt{C(\kappa_0,A_0)}\,
E_\beta\!\left(-\mu t^\beta |\lambda|^\alpha\right)
|\lambda|^{\frac{\kappa_0-1}{2}}
\right)^2
\,d\lambda.
\]

The integrand converges pointwise to zero. Noting that for sufficiently small $\varepsilon$,
${f(\varepsilon^{\rho_3}\lambda)}/{\varepsilon^{\rho_3(\kappa_0-1)}}
\le C|\lambda|^{\kappa_0-1},$
one can see that in a neighbourhood of the origin, the integrand is dominated by
$
C_1|\widehat g(\lambda,x)|^2|\lambda|^{\kappa_0-1},
$
and therefore has an integrable majorant on $\mathbb R$.
Hence, one can apply the generalised Lebesgue dominated convergence theorem, and the remaining part of the proof uses the same arguments as in the proof of \cite[Theorem 3.2]{Alghamdi2025II}. Similar to Theorem~\ref{ThmP2.1}, in contrast to \cite[Theorem 3.2]{Alghamdi2025II}, the condition $\alpha > \kappa_0/2$ is not required due to the presence of the kernel multiplier.

The covariance representation follows directly from the It\^o isometry, which completes the proof.
\end{proof}

\begin{remark}
Note that, in general, the product $\widehat{g}(\lambda,x)\overline{\widehat{g}(\lambda,x')}$ is not a function of $x - x'$. Consequently, in contrast to the results in \cite{Alghamdi2024}, the limit fields ${U}_0^{g}(t,x)$ and $\tilde{U}_0^{g}(t,x)$ are, in general, non-stationary in both space and time.
\end{remark}

\begin{example} \label{numexP22}

In this example, we considered the limit random field $\tilde{U}_0^g(t,x)$, $t>0$, $x\in\mathbb{R},$ obtained in Theorem~\ref{ThmP2.2} under Assumption~{\rm\ref{Asumptionmain}} with $A_0 \ne 0.$
 The numerical parameters were kept the same as in Example~\ref{numexP21}. In addition, for this case, the values $A_0=0.4$ and $\kappa_0=0.2$ were used. The same kernels as in~\eqref{exampv=1/2} and~\eqref{examplev=3/2}  were employed. The field was evaluated on the space--time grid described in Example~\ref{numexP21}.

The limit field is simulated using the spectral representation in~\eqref{RTH3.2A0>0} and its Riemann-type approximation
\[\tilde{U}_0^g(t,x) \approx
\sqrt{C(\kappa_0,A_0)}
\sum_{j=-N}^{N}
\widehat{g}(\lambda_j,x)\,
E_{\beta}\!\left(-\mu t^{\beta}|\lambda_j|^{\alpha}\right)
|\lambda_j|^{(\kappa_0-1)/2}
\,W(\Delta \lambda_j).
\]

The additional factor $|\lambda_j|^{(\kappa_0-1)/2}$ is the main numerical difference from the example for Theorem~\ref{ThmP2.1}. It gives stronger weight to low frequencies, and therefore, the behaviour of the field is more influenced by the neighbourhood of the origin in the spectral domain.

Figure~\ref{fig:P2th3.2a} shows the realization of
$\tilde{U}_0^g(t,x)$ for $\nu=1/2.$ Similar to Example~\ref{numexP21}, the main variation of the field remains in the spatial direction and it gradually decreases over the temporal domain.
The covariance function in Figure~\ref{fig:P2th3.2b} is largest near the selected spatial reference location and decreases as the spatial or temporal separation increases. The covariance surface is more elevated compared to Figure~\ref{fig:P2th3.1b}.
Figures~\ref{fig:P2th3.2c} and~\ref{fig:P2th3.2d} show the spatial and temporal covariance functions for fixed temporal and spatial variables, respectively. Their shapes are similar to those in Example~\ref{numexP21}, but with higher values. This difference in magnitude is due to the stronger contribution of low-frequency components in the case $A_0>0$.
  \begin{figure}[!ht]
    \centering
    \begin{subfigure}{0.49\textwidth}
        \centering
        \includegraphics[width=\linewidth, height=0.8\linewidth, trim=15mm  10mm 5mm 35mm,clip]{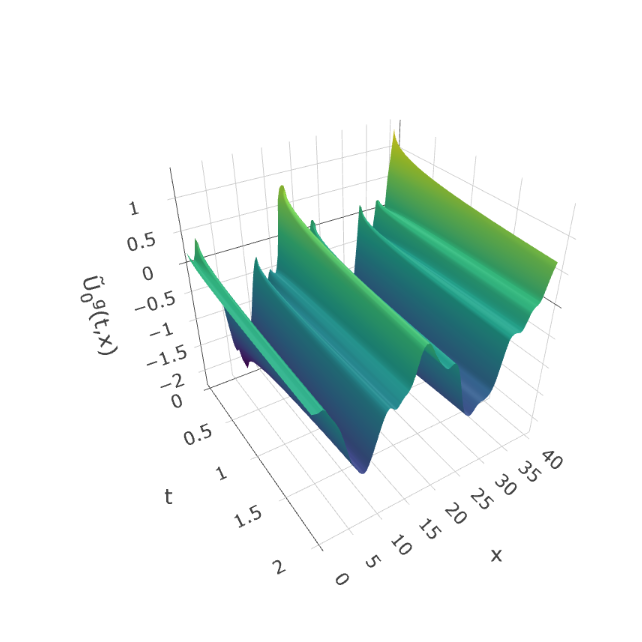}
        \caption{Realization of $\tilde{U}_0^g(t,x)$}
        \label{fig:P2th3.2a}
    \end{subfigure}\hfill
    \begin{subfigure}{0.49\textwidth}
        \centering
        \includegraphics[width=\linewidth, height=0.8\linewidth, trim=15mm  10mm 5mm 35mm,clip]{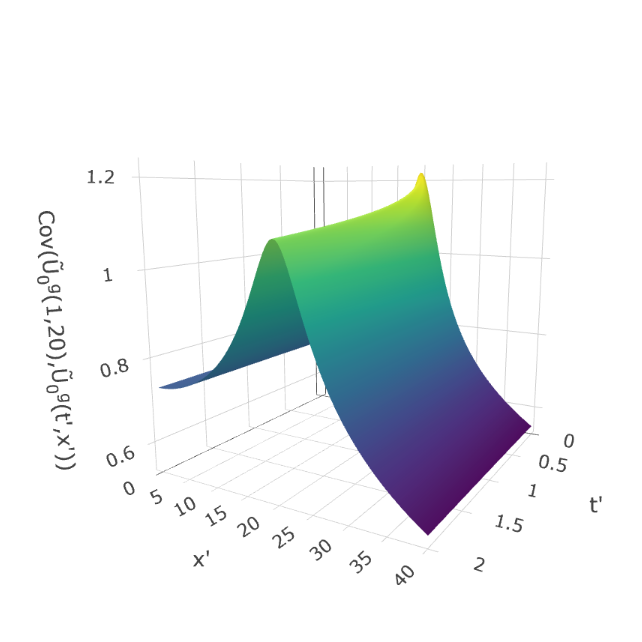}
        \caption{Covariance function of $\tilde{U}_0^g(t,x)$}
        \label{fig:P2th3.2b}
    \end{subfigure}
    \begin{subfigure}{0.48\textwidth}
        \centering
        \includegraphics[width=\linewidth, height=0.8\linewidth]{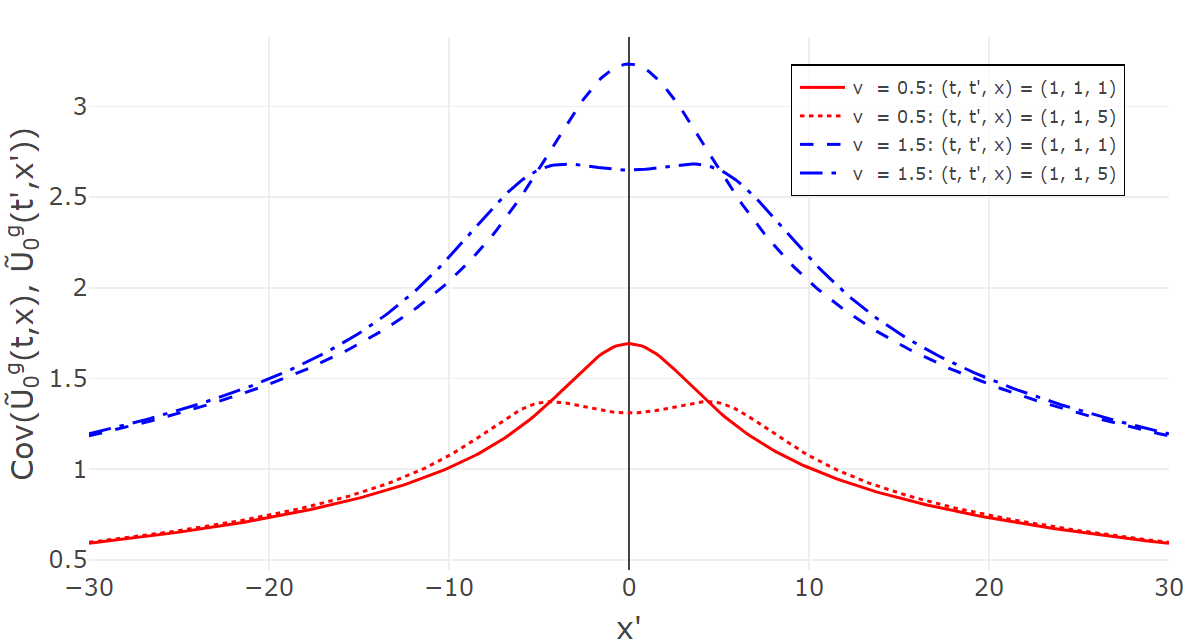}
        \caption{Examples of spatial covariance functions, for fixed $t$, $t'$ and $\nu=0.5$ and $1.5$}
        \label{fig:P2th3.2c}
    \end{subfigure}\hfill
    \begin{subfigure}{0.48\textwidth}
        \centering
        \includegraphics[width=\linewidth, height=0.8\linewidth]{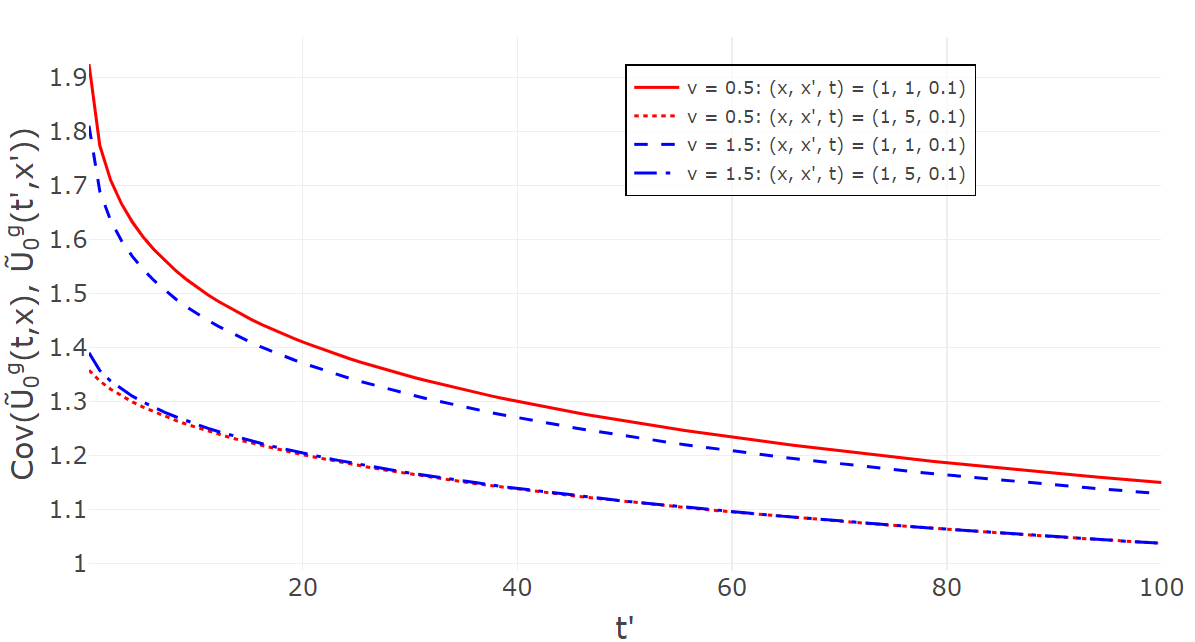}
        \caption{Examples of temporal covariance functions, for fixed $x,$ $x'$ and $\nu=0.5$ and $1.5$}
        \label{fig:P2th3.2d}
    \end{subfigure}
    \caption{Example of the limit field $\tilde{U}_0^g(t,x)$ from Theorem~\ref{ThmP2.2}}
    \label{fig:P2th3.2}
\end{figure}
\end{example}

\section{Some properties of multiscaling limit fields}\label{Holder}
       This section studies properties of the multiscaling limit fields introduced in the previous section, demonstrating how they depend on the smoothing kernels and parameters of the FRBE.

    Let us define  \[
    \eta_*=
    \sup\left\{
    \eta\in(0,1]:
    \int_{\mathbb R}
    |\lambda|^{2\alpha\eta}
    |\widehat g(\lambda,x)|^2
    d\lambda<\infty\ \mbox{for all}\ x\in\mathbb{R}
    \right\}
    \]
   and
     \[
    \tilde\eta_*=
    \sup\left\{
    \eta\in(0,1]:
    \int_{\mathbb R}
    |\lambda|^{2\alpha\eta + \kappa_{0}-1}
    |\widehat g(\lambda,x)|^2
    d\lambda<\infty\ \mbox{for all}\ x\in\mathbb{R}
    \right\}.
    \]
\begin{example}
For the case of the Mat\'ern kernels, using $\widehat g(\lambda,x)$ given by~\eqref{MFourier}, one obtains
\[
\int_{\mathbb R}
|\lambda|^{2\alpha \eta}
|\widehat g(\lambda,x)|^2\,d\lambda
=
C
\int_{\mathbb R}
\frac{|\lambda|^{2\alpha \eta}}
{(a^2+\lambda^2)^{2\nu+1}}
\,d\lambda
< +\infty .
\]
As $\nu>0$ and $\alpha>0$, this integral is finite when
$4\nu+2-2\alpha\eta>1.$ Therefore, $\eta_*=\min\left(1,\frac{4\nu+1}{2\alpha}\right).$ Similarly, one obtains $\tilde{\eta}_*
=
\min\left(1,\frac{4\nu+2-\kappa_0}{2\alpha}\right).$

  \end{example}

   First, we investigate the Hölder continuity of the limit fields in time.
    \begin{thm} \label{ThmP2.3}
    Assume that there exist $\eta_*>0$ for the kernel $g(\cdot,\cdot).$ Then the limit field $U_0^g(t,x)$ in Theorem~\ref{ThmP2.1} is mean-square H\"older continuous in $t$ of any order  $\gamma_t\in (0,\eta_*\beta).$ It has a continuous modification with sample paths that are almost surely H\"older continuous in $t$ of order $\gamma_t.$

If there exists $\tilde \eta_*>0$ for the kernel $g(\cdot,\cdot)$, then the limit field $\tilde U_0^g(t,x)$ in Theorem~\ref{ThmP2.2} is mean-square H\"older continuous in $t$ and has a continuous modification with sample paths that are almost surely H\"older continuous in $t$ of any order $\gamma_t\in (0,\tilde{\eta}_*\beta).$
        \end{thm}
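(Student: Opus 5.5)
We start from the spectral representation~\eqref{TH23.1A0=0} and the Itô isometry. For fixed $x$ and $t,t'$ in a compact interval $[T_1,T_2]\subset(0,\infty)$,
\[
\mathbb E\bigl(U_0^g(t,x)-U_0^g(t',x)\bigr)^2
= C(\bar{\kappa},\bar{w},\bar{A})\int_{\mathbb R}|\widehat g(\lambda,x)|^2\bigl(E_\beta(-\mu t^\beta|\lambda|^\alpha)-E_\beta(-\mu (t')^\beta|\lambda|^\alpha)\bigr)^2d\lambda .
\]
The whole proof therefore reduces to a deterministic bound on increments of the Mittag-Leffler factor. These increments are then balanced against the moment condition defining $\eta_*$. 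The case of $\tilde U_0^g$ from Theorem~\ref{ThmP2.2} is identical, with the extra weight $|\lambda|^{\kappa_0-1}$ and $\tilde\eta_*$ in place of $\eta_*$.

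\textbf{Step 1 (key inequality).} Show that for every $\eta\in[0,1]$ there is a constant $C$ such that, for $s,s'\ge0$,
\[
|E_\beta(-s)-E_\beta(-s')|\le C|s-s'|^{\eta}.
\]
This follows by interpolating two facts: $E_\beta(-\cdot)$ takes values in $(0,1]$ by~\eqref{upperch5}, so the difference is at most $1$; and $E_\beta(-\cdot)$ is Lipschitz on $[0,\infty)$. The Lipschitz property holds because $E_\beta(-s)$ is completely monotone with bounded derivative $-E_{\beta,\beta}(-s)/\beta$, and $|E_{\beta,\beta}(-s)|\le C$. Taking $\min(1,C|s-s'|)\le C|s-s'|^\eta$ gives the claim.

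Apply this with $s=\mu t^\beta|\lambda|^\alpha$ and $s'=\mu (t')^\beta|\lambda|^\alpha$. Since $|t^\beta-(t')^\beta|\le C|t-t'|^\beta$ for $\beta\in(0,1)$, the squared increment is bounded by
\[
C|\lambda|^{2\alpha\eta}\,|t-t'|^{2\beta\eta}.
\]
This bound holds uniformly in $t$ on compacts, including near $0$.

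\textbf{Step 2 (mean-square Hölder continuity).} Given $\gamma_t<\eta_*\beta$, choose $\eta<\eta_*$ with $\gamma_t\le\eta\beta$. By the definition of $\eta_*$, the integral $\int|\lambda|^{2\alpha\eta}|\widehat g(\lambda,x)|^2d\lambda$ is finite. If the supremum is not attained, take $\eta$ strictly below it. For the monotonicity in $\eta$, split $|\lambda|\le1$, where $\widehat g\in L_2$ suffices, from $|\lambda|>1$, where the power is increasing in $\eta$. This gives
\[
\mathbb E\bigl(U_0^g(t,x)-U_0^g(t',x)\bigr)^2\le C|t-t'|^{2\eta\beta},
\]
which is mean-square Hölder continuity of order $\eta\beta\ge\gamma_t$.

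\textbf{Step 3 (continuous modification).} Since the field is Gaussian, its moments satisfy
\[
\mathbb E|U_0^g(t,x)-U_0^g(t',x)|^{2m}\le C_m|t-t'|^{2m\eta\beta}
\]
for every $m$. Kolmogorov–Chentsov then yields a continuous modification in $t$ whose paths are a.s. Hölder of any order less than $\eta\beta-1/(2m)$. Letting $m\to\infty$ and $\eta\uparrow\eta_*$ covers every $\gamma_t<\eta_*\beta$.

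\textbf{Case of $\tilde U_0^g$.} The same bound holds with integrand $|\lambda|^{2\alpha\eta+\kappa_0-1}|\widehat g|^2$. Near the origin this is integrable because the field is well defined, which requires $|\lambda|^{\kappa_0-1}|\widehat g|^2\in L_1$ as noted in Theorem~\ref{ThmP2.2}.

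\textbf{Main obstacle.} The main obstacle is Step 1, namely justifying the uniform Lipschitz bound on $E_\beta(-\cdot)$. I would cite the Mittag-Leffler derivative identity and the boundedness of $E_{\beta,\beta}(-s)$ on $[0,\infty)$ from~\cite{Mainardi2014}, which follows from complete monotonicity. The remaining steps are routine.
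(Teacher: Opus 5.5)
Your proposal is correct and follows the same route as the paper. Both use the It\^o isometry and an $\eta$-H\"older bound on $E_\beta(-\cdot)$ to get the estimate $C|\lambda|^{2\alpha\eta}|t-t'|^{2\eta\beta}$, then finiteness of the defining moment integral for $\eta<\eta_*$ (respectively $\tilde\eta_*$), and finally Kolmogorov's theorem via Gaussian moments with the moment order sent to infinity.

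Your Step~1 justification is tighter than the paper's appeal to Lipschitz continuity on bounded intervals. You use the global derivative bound $|\tfrac{d}{ds}E_\beta(-s)|=E_{\beta,\beta}(-s)/\beta\le 1/\Gamma(1+\beta)$ together with $0<E_\beta(-s)\le 1$, and this matters because the arguments $\mu t^\beta|\lambda|^\alpha$ are unbounded in $\lambda$. Your remarks on down-closedness in $\eta$, and on integrability near the origin for $\tilde U_0^g$, also fill details the paper leaves implicit.
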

 \begin{proof}
Let $x\in\mathbb R$ be fixed.
    Using the spectral representation of $U_0^g(t,x)$ in Theorem~\ref{ThmP2.1} and the It\^o isometry for stochastic integrals with respect to the Wiener measure, one obtains that for $t,$ $s >0$ it holds that
\begin{align}
    \mathbb E\left|U_0^g(t,x)-U_0^g(s,x)\right|^2
   &=
   C(\bar{\kappa},\bar{w},\bar{A}) \nonumber
\times  \int_{\mathbb R}
    |\widehat g(\lambda,x)|^2
    \left|
    E_\beta\!\left(-\mu t^\beta|\lambda|^\alpha\right) \right. \\
&\left.\quad\quad-
    E_\beta\!\left(-\mu s^\beta|\lambda|^\alpha\right)
    \right|^2
    d\lambda. \label{ito}
\end{align}

  It follows from local smoothness of the Mittag-Leffler function that on bounded intervals it is Lipschitz, and there exists a constant $C>0$ such that for any $\eta\in(0,1]$
    \[
    |E_\beta(-v_1)-E_\beta(-v_2)|
    \le
    C |v_1-v_2|^\eta.
    \]
    Taking $v_1=\mu t^\beta|\lambda|^\alpha$ and $v_2=\mu s^\beta|\lambda|^\alpha,$
   since $\beta \in (0,1]$,    we get
\[
\left|
E_\beta\!\left(-\mu t^\beta|\lambda|^\alpha\right)
-
E_\beta\!\left(-\mu s^\beta|\lambda|^\alpha\right)
\right|
\le
C |\lambda|^{\eta\alpha} |t^\beta-s^\beta|^\eta\le C |\lambda|^{\eta\alpha} |t-s|^{\eta\beta}.
\]

Using the It\^o isometry, we obtain
    \[
    \mathbb E\left|U_0^g(t,x)-U_0^g(s,x)\right|^2
    \le
    C |t-s|^{2\eta\beta}
    \int_{\mathbb R}
    |\lambda|^{2\eta\alpha}
    |\widehat g(\lambda,x)|^2
    d\lambda .
    \]

Thus, as the last integral is finite for $\eta<\eta_*,$  $U_0^g(\cdot,x)$ is mean-square H\"older continuous in $t$ of order
   $
    \eta\beta.$

Moreover, since $U_0^g(t,x)$ is Gaussian, for every $p\ge 2$,
    \[
    \mathbb E\left|U_0^g(t,x)-U_0^g(s,x)\right|^p
    =
    C_p
    \left(
    \mathbb E\left|U_0^g(t,x)-U_0^g(s,x)\right|^2
    \right)^{p/2}
    \le
    C |t-s|^{\eta p\beta}.
    \]
    By Kolmogorov's continuity theorem, the field has a continuous modification with sample paths that are almost surely H\"older
    continuous in $t$ of any order
   $
    \gamma_t\in (0,\eta_*\beta-1/p).$ As~$p$ can be selected arbitrary large, we obtain that the  H\"older exponent in $t$ can be chosen as $\gamma_t\in (0,\eta_*\beta).$

    For the limit field $\tilde U_0^g(t,x)$ in Theorem~\ref{ThmP2.2} and $\gamma_t\in (0,\tilde\eta_*\beta)$ the proof is identical, noting that
\begin{align*}
\mathbb E\left|\tilde U_0^g(t,x)-\tilde U_0^g(s,x)\right|^2
& =
C(\kappa_0,A_0)
\int_{\mathbb R}
|\widehat g(\lambda,x)|^2
\left|
E_\beta\!\left(-\mu t^\beta|\lambda|^\alpha\right)\right.
\\
&\left.\quad\quad -
E_\beta\!\left(-\mu s^\beta|\lambda|^\alpha\right)
\right|^2
|\lambda|^{\kappa_0-1}
\,d\lambda
\end{align*}
and
\[
\mathbb E\left|\tilde U_0^g(t,x)-\tilde U_0^g(s,x)\right|^2
\le
C |t-s|^{2\eta\beta}
\int_{\mathbb R}
|\lambda|^{2\eta\alpha+\kappa_0-1}
|\widehat g(\lambda,x)|^2
d\lambda .\qedhere
\]
\end{proof}
The next result examines the Hölder continuity of the limit field with respect to the spatial variable.
  \begin{thm} \label{ThmP2.4}
  Assume that the kernel function $g(\cdot,\cdot)$ has a finite Sobolev norm of order~$-\alpha$ with respect to the first variable, and assume that this norm is H\"older continuous of order $\theta \in (0,1]$ with respect to the second variable, that is,
      \[
      \left\|
      g(z,x)-g(z,y)
      \right\|_{H_z^{-\alpha}}
      \le
      C |x-y|^\theta.
      \] Then, the limit random field $U_0^g(t,x)$ in Theorem~\ref{ThmP2.1} is mean-square H\"older continuous in $x$ of any order  $\gamma_x\in (0,\theta).$ It has a continuous modification with sample paths that are almost surely H\"older continuous in $x$ of order $\gamma_x.$

If the following assumption holds true
\[
\left\|
(-\Delta)_z^{(\kappa_0-1)/4}(g(z,x)-g(z,y))
\right\|_{H_z^{-\alpha}}
\le
C |x-y|^{\theta^*},
\]
then, the limit random field $\tilde U_0^g(t,x)$ in Theorem~\ref{ThmP2.2} is mean-square H\"older continuous, and it admits a modification with almost surely continuous sample paths in $x$ of the H\"older order $\gamma_x\in (0,\theta^*)$.
\end{thm}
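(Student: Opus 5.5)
We mirror the proof of Theorem~\ref{ThmP2.3}, now fixing $t>0$ and varying the spatial argument. The plan is to bound the mean-square increment by the Sobolev norm in the hypothesis and then upgrade to sample-path regularity via Gaussianity and Kolmogorov's theorem.

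\emph{Step 1 (Itô isometry).} By the spectral representation \eqref{TH23.1A0=0} and the Itô isometry,
\[
\mathbb E\left|U_0^g(t,x)-U_0^g(t,y)\right|^2
= C(\bar{\kappa},\bar{w},\bar{A})\int_{\mathbb R}\left|\widehat g(\lambda,x)-\widehat g(\lambda,y)\right|^2 E_\beta^2\!\left(-\mu t^\beta|\lambda|^\alpha\right)d\lambda .
\]

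\emph{Step 2 (spectral bound, the key step).} We use the upper bound in \eqref{upperch5}:
\[
E_\beta^2(-\mu t^\beta|\lambda|^\alpha)\le \left(1+\mu t^\beta|\lambda|^\alpha/\Gamma(1+\beta)\right)^{-2}\le C_t\,(1+|\lambda|^{2})^{-\alpha}.
\]
The second inequality needs care. For $|\lambda|\ge1$ we have $(1+c|\lambda|^\alpha)^{-2}\le C|\lambda|^{-2\alpha}\le C'(1+|\lambda|^2)^{-\alpha}$. For $|\lambda|<1$ both sides are bounded above and below by positive constants. The constant depends on $t$, and it is uniform for $t$ in compact subsets of $(0,\infty)$. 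The decay $(1+c|\lambda|^\alpha)^{-2}\sim|\lambda|^{-2\alpha}$ is in fact stronger than the required $|\lambda|^{-\alpha}$, so this step has slack. Consequently
\[
\mathbb E\left|U_0^g(t,x)-U_0^g(t,y)\right|^2\le C\,\|g(z,x)-g(z,y)\|^2_{H_z^{-\alpha}}\le C|x-y|^{2\theta}.
\]
This is mean-square Hölder continuity of order $\theta$, hence of any order $\gamma_x\in(0,\theta)$. The assumption that the $H^{-\alpha}_z$ norm of $g$ is finite guarantees that the integrals are finite.

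\emph{Step 3 (sample paths).} Since $U_0^g$ is Gaussian,
\[
\mathbb E|U_0^g(t,x)-U_0^g(t,y)|^p=C_p\left(\mathbb E|\cdot|^2\right)^{p/2}\le C|x-y|^{p\theta}.
\]
Kolmogorov's continuity theorem then gives a modification that is Hölder continuous of any order below $\theta-1/p$. Letting $p\to\infty$ yields any order $\gamma_x\in(0,\theta)$.

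\emph{Step 4 (the field $\tilde U_0^g$).} The argument is identical, with the additional weight $|\lambda|^{\kappa_0-1}$ in the isometry. We write
\[
|\widehat g(\lambda,x)-\widehat g(\lambda,y)|^2|\lambda|^{\kappa_0-1}=\left|\mathcal F\!\left[(-\Delta)_z^{(\kappa_0-1)/4}(g(\cdot,x)-g(\cdot,y))\right](\lambda)\right|^2,
\]
since the symbol of $(-\Delta)^{s}$ is $|\lambda|^{2s}$ with $s=(\kappa_0-1)/4$. The same Mittag-Leffler bound then gives
\[
\mathbb E|\tilde U_0^g(t,x)-\tilde U_0^g(t,y)|^2\le C|x-y|^{2\theta^*},
\]
and Kolmogorov's theorem gives the required modification.

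The main obstacle is only to make Step 2 precise: the comparison of $E_\beta^2$ with the Bessel weight $(1+|\lambda|^2)^{-\alpha}$, including the case $\alpha=0$, where the bound is trivial because $E_\beta\le1$ and $H^{0}=L_2$. We must also keep track of the Fourier normalisation constant ($2\pi$) in the definition of the Sobolev norm.
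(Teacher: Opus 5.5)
Your proposal is correct and follows the paper's proof essentially step by step: the It\^o isometry, then the upper bound in \eqref{upperch5} to dominate $E_\beta^2(-\mu t^\beta|\lambda|^\alpha)$ by $C(1+|\lambda|^2)^{-\alpha}$ so that the increment is controlled by the $H_z^{-\alpha}$ norm, then Gaussian moments with Kolmogorov's theorem, and finally the Fourier symbol of $(-\Delta)_z^{(\kappa_0-1)/4}$ to absorb the weight $|\lambda|^{\kappa_0-1}$ for $\tilde U_0^g$. You even keep the constant $C(\bar{\kappa},\bar{w},\bar{A})$ that the paper drops. One side remark needs correcting: the weight $(1+|\lambda|^2)^{-\alpha}$ decays like $|\lambda|^{-2\alpha}$, not $|\lambda|^{-\alpha}$, so for $\beta<1$ Step~2 has no slack (by the lower bound in \eqref{upperch5}, $E_\beta^2$ decays at exactly the rate $|\lambda|^{-2\alpha}$), although the inequality you actually prove is right.
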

\begin{proof}
   Let $t>0$ be fixed. By the It\^o isometry, we have
  \[
  \begin{aligned}
      \mathbb{E}\left|U_0^g(t,x)-U_0^g(t,y)\right|^2
      &=
      \int_{\mathbb{R}}
      \left|
      \widehat{g}(\lambda,x)-\widehat{g}(\lambda,y)
      \right|^2
      \left|
      E_\beta\!\left(-\mu t^\beta |\lambda|^\alpha\right)
      \right|^2
      \,d\lambda .
  \end{aligned}
  \]

  Using the estimate (\ref{upperch5}) one obtains that there exists a such constant $C>0$ that
  \[
  \left|
  E_\beta\!\left(-\mu t^\beta |\lambda|^\alpha\right)
  \right|^2
  \le
  \left(1 + \frac{\mu t^\beta |\lambda|^\alpha}{\Gamma(1+\beta)}\right)^{-2} \le
  \frac{C}{\left(1+  |\lambda|^\alpha\right)^2}\le
  \frac{C}{\left(1+  |\lambda|^2\right)^\alpha}.
  \]

  Hence,
  \[
  \begin{aligned}
      \mathbb{E}\left|U_0^g(t,x)-U_0^g(t,y)\right|^2
      &\le
      C
      \int_{\mathbb{R}}
      \frac{
          \left|
           \widehat{g}(\lambda,x)-\widehat{g}(\lambda,y)
          \right|^2
      }{
          \left(1+ |\lambda|^2\right)^\alpha
      }
      \,d\lambda .
  \end{aligned}
  \]
The last integral is the squared negative Sobolev norm with respect to the first variable of the function $g(\cdot,x)-g(\cdot,y).$
 Therefore, by the conditions of the theorem,
  \[
      \mathbb{E}\left|U_0^g(t,x)-U_0^g(t,y)\right|^2
      \le
      C
      \left\|
      g(z,x)-g(z,y)
      \right\|_{H_z^{-\alpha}}^2\le C |x-y|^{2\theta}.
  \]
  Thus, $U_0^g(t,x)$ is mean-square H\"older continuous in $x$ of any order $\gamma_x \in (0,\theta)$. By Kolmogorov's continuity theorem, it admits a modification whose sample paths are H\"older continuous of order $\gamma_x$.

In the case of the limit field $\tilde U_0^g(t,x)$ in Theorem~\ref{ThmP2.2} we analogously obtain that
\begin{equation}\label{upperbp}
\mathbb{E}\left|\tilde U_0^g(t,x)-\tilde U_0^g(t,y)\right|^2
\le
C
\int_{\mathbb{R}}
\frac{
\left|
\widehat{g}(\lambda,x)-\widehat{g}(\lambda,y)
\right|^2
}{
\left(1+ |\lambda|^2\right)^\alpha
}
|\lambda|^{\kappa_0-1}
\,d\lambda .
\end{equation}

By the properties of the Fourier transforms of the fractional Laplacian
\[
\widehat{(-\Delta)_z^{c/2} g(z,x)}
=
|z|^c\, \widehat{g}(z,x),
\qquad c\in(0,2),
\]
one obtains that the last integral in (\ref{upperbp}) is equal to \[
\left\|
(-\Delta)_z^{(\kappa_0-1)/4}(g(z,x)-g(z,y))
\right\|^2_{H_z^{-\alpha}}.
\] Thus, the assumption of the theorem implies the required statement. \end{proof}
    \begin{example}
For the Mat\'ern kernel case, it follows by the formula (\ref{MFourier}) that    \[
I(x,y):=\int_{\mathbb{R}}
\frac{
    \left|
    \widehat{g}(\lambda,x)-\widehat{g}(\lambda,y)
    \right|^2
}{
    \left(1+ |\lambda|^2\right)^\alpha
}
\,d\lambda
    =C
    \int_{\mathbb{R}}
    \frac{
        \left|
        e^{i\lambda x}-e^{i\lambda y}
        \right|^2 \,d\lambda
    }{
        \left(1+|\lambda|^2\right)^{\alpha
    }\left( {a^2} + \lambda^2 \right)^{2\nu + 1}}\]
    \[
   \le C
    \int_{\mathbb{R}}
    \frac{
        \left|
        e^{i\lambda x}-e^{i\lambda y}
        \right|^2 \,d\lambda
    }{
        \left(1+|\lambda|^2\right)^{\alpha+2\nu + 1
        }}.
    \]

Using the elementary estimate
  $
    \left|e^{i\lambda x}-e^{i\lambda y}\right|
    \le C\min\{|\lambda||x-y|,1\}
$
one obtains
\begin{equation}\label{upperI}
    I(x,y)
    \le
    C\int_{\mathbb R}
    \frac{
        \min(\lambda^2|x-y|^2,1)
    }{
        (1+\lambda^2)^{\alpha+2\nu + 1}
    }
    \,d\lambda .
\end{equation}

Consider $ 0<|x-y|\le 1$ and split the integral in (\ref{upperI}) at
$|\lambda|=|x-y|^{-1}.$
Then

    \[
    I(x,y)
    \le
    C|x-y|^2
    \int_{|\lambda|\le |x-y|^{-1}}
    \frac{\lambda^2\,d\lambda}{(1+\lambda^2)^{\alpha+2\nu + 1}}
       +
    C
    \int_{|\lambda|>|x-y|^{-1}}
    \frac{d\lambda}{(1+\lambda^2)^{\alpha+2\nu + 1}} .
    \]

Noting that $\alpha$ and $\nu>0,$ then  $\alpha+2\nu + 1>1/2$, the second integral is finite and
    \[
    \int_{|\lambda|>|x-y|^{-1}}
    \frac{d\lambda}{(1+\lambda^2)^{\alpha+2\nu + 1}}
    \leq C
    \int_{|\lambda|>|x-y|^{-1}}
    |\lambda|^{-2({\alpha+2\nu + 1})}
    \,d\lambda
    \leq C
    |x-y|^{2{\alpha+4\nu + 1}}.
    \]

   Since for large $|\lambda|$  it holds that
    \[
    \frac{\lambda^2}{(1+\lambda^2)^{\alpha+2\nu + 1}}
    \leq C |\lambda|^{-2{\alpha-4\nu}},
    \]
   one obtains  for the first integral
    \[
   |x-y|^2 \int_{|\lambda|\le |x-y|^{-1}}
    \frac{\lambda^2}{(1+\lambda^2)^{\alpha+2\nu + 1}}
    \,d\lambda
    \leq C
    \begin{cases}
        |x-y|^{2\alpha+4\nu+1}, & 2\alpha+4\nu<1,\\[1mm]
        -|x-y|^2\log(|x-y|), & 2\alpha+4\nu=1,\\[1mm]
        |x-y|^2, & 2\alpha+4\nu>1.
    \end{cases}
    \]

   Thus,  by combining the two estimates,
    \[
    I(x,y)\leq C
     \begin{cases}
        |x-y|^{2\alpha+4\nu+1}, & 2\alpha+4\nu<1,\\[1mm]
        -|x-y|^2\log(|x-y|), & 2\alpha+4\nu=1,\\[1mm]
        |x-y|^2, & 2\alpha+4\nu>1.
    \end{cases}
    \]

    On the other hand, if $|x-y|\ge 1$, then, for any $\theta>0,$ it holds
      \[
    I(x,y)
    \le
    C \int_{\mathbb{R}}
    \frac{d\lambda
    }{
        \left(1+|\lambda|^2\right)^{\alpha+2\nu + 1
    }} \le
C|x-y|^{2\theta}.
    \]

Consequently, in the Mat\'ern kernel case, Theorem~\ref{ThmP2.4} holds for the limit random field $U_0^g(t,x)$ provided that
    $\theta=\min\left(1,\alpha+2\nu+1/2\right).$
        Analogously, for the limit random field $\tilde{U}_0^g(t,x)$, it is required that $
        \theta^*=\min\left(1,\alpha+2\nu+1-{\kappa_0}/{2}\right).
     $
          \end{example}

Now we investigate the dependence structure of the limit fields. Recall that, in general, these random fields are nonstationary in both space and time. Therefore, we characterise their short- and long-range dependence separately for each variable and at each space--time location.

We say that a random field $\zeta(x,t)$ is short-range dependent in time at a point $(x_0, t_0),$ $x_0\in\mathbb{R},$ $t_0>0,$ if its covariance function is absolutely integrable with respect to time lag
\begin{equation}\label{SHORT_t}
\int_0^{+\infty}|\operatorname{Cov}\bigl(\zeta(t_0,x_0),\zeta(t_0+h,x_0)\bigr)|dh<+\infty.
\end{equation}
It is  short-range dependent in space at $(x_0,t_0)$ if
\begin{equation}\label{SHORT_x}
\int_{\mathbb R}|\operatorname{Cov}\bigl(\zeta(t_0,x_0),\zeta(t_0,x_0+h)\bigr)|dh<+\infty.
\end{equation}
If the above integrals are divergent, the fields are called long-range dependent.

\begin{thm}  \label{ThmP2.6}
Let $\beta\in(0,1)$ and the kernel function $g(x_1,x)$ have a nondegenerate Fourier transform $\hat{g}(\lambda,x_0)$, i.e., $\hat{g}(\lambda,x_0) \neq 0$ on a set of positive measure. Then, the limit fields $U_0^{g}(t,x)$ and $\tilde{U}_0^{g}(t,x)$ are long-range dependent in time at $(x_0,t_0)$.

If for almost all $\lambda\in\mathbb R$ it holds that uniformly $\hat{g}(\lambda,\cdot)\in L_1(\mathbb R)$, then the limit fields $U_0^{g}(t,x)$ and $\tilde{U}_0^{g}(t,x)$ are  short-range dependent in space at $(x_0,t_0)$.
\end{thm}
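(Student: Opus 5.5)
We treat time and space separately, in both cases starting from the covariance representations \eqref{CovTH3.1a0=0} and \eqref{covTH3.2A0>0} at $x=x'=x_0$ (time) or $t=t'=t_0$ (space). For the field $\tilde U_0^g$ the same argument goes through with the extra weight $|\lambda|^{\kappa_0-1}$, so we describe the case of $U_0^g$.

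\emph{Long-range dependence in time.} Fix $(x_0,t_0)$. The covariance is
\[
\operatorname{Cov}\bigl(U_0^g(t_0,x_0),U_0^g(t_0+h,x_0)\bigr)=C(\bar{\kappa},\bar{w},\bar{A})\int_{\mathbb R}|\widehat g(\lambda,x_0)|^2E_\beta(-\mu t_0^\beta|\lambda|^\alpha)E_\beta(-\mu (t_0+h)^\beta|\lambda|^\alpha)\,d\lambda .
\]
For $0<\beta<1$ the Mittag-Leffler function $E_\beta(-s)$ is completely monotone on $[0,\infty)$. Hence the integrand is nonnegative, and the absolute value in \eqref{SHORT_t} may be dropped. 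Applying the lower bound in \eqref{upperch5} to both factors gives
\[
\operatorname{Cov}\ge C\int_{\mathbb R}\frac{|\widehat g(\lambda,x_0)|^2}{(1+\Gamma(1-\beta)\mu t_0^\beta|\lambda|^\alpha)(1+\Gamma(1-\beta)\mu (t_0+h)^\beta|\lambda|^\alpha)}\,d\lambda .
\]
Since $\widehat g(\cdot,x_0)\neq0$ on a set of positive measure, we can choose $M>0$ so that $S:=\{|\lambda|\le M,\ \widehat g(\lambda,x_0)\neq 0\}$ has positive measure. On $S$ the first factor is bounded below. The second factor is at least $c\,(1+(t_0+h)^\beta)^{-1}\ge c' h^{-\beta}$ for $h\ge1$. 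Therefore $\operatorname{Cov}\ge c'' h^{-\beta}$ for $h\ge 1$, and since $\beta<1$ the integral in \eqref{SHORT_t} diverges. For $\tilde U_0^g$ the factor $|\lambda|^{\kappa_0-1}$ is bounded below by $M^{\kappa_0-1}>0$ on $S$, so the same bound holds. This step is routine once positivity from complete monotonicity is available. If we prefer to avoid citing complete monotonicity, the lower bound \eqref{upperch5} already shows $E_\beta(-s)>0$, which is all that is needed.

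\emph{Short-range dependence in space.} Fix $t=t'=t_0$. Bound the covariance by Fubini/Tonelli:
\[
\int_{\mathbb R}\bigl|\operatorname{Cov}\bigl(U_0^g(t_0,x_0),U_0^g(t_0,x_0+h)\bigr)\bigr|dh\le C\int_{\mathbb R}|\widehat g(\lambda,x_0)|\,E_\beta^2(-\mu t_0^\beta|\lambda|^\alpha)\int_{\mathbb R}|\widehat g(\lambda,x_0+h)|\,dh\,d\lambda .
\]
The uniform $L_1$ assumption means $\sup_\lambda\|\widehat g(\lambda,\cdot)\|_{L_1}\le K$, so the inner integral is at most $K$. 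What remains is $K\int|\widehat g(\lambda,x_0)|E_\beta^2\,d\lambda$.

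The main obstacle is showing that this last integral is finite, because $\widehat g(\cdot,x_0)\in L_2$ does not by itself give $L_1$. For $U_0^g$ I would use the upper bound in \eqref{upperch5}, $E_\beta^2\le C(1+|\lambda|^\alpha)^{-2}$, together with Cauchy--Schwarz:
\[
\int|\widehat g|E_\beta^2\le \|\widehat g(\cdot,x_0)\|_{L_2}\Bigl(\int(1+|\lambda|^\alpha)^{-4}d\lambda\Bigr)^{1/2},
\]
which is finite when $\alpha>1/8$. For $\tilde U_0^g$ the extra factor $|\lambda|^{\kappa_0-1}$ is integrable near the origin because $\kappa_0>0$. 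For small $\alpha$, or as a cleaner alternative, I would interpret the hypothesis as also providing $\widehat g(\cdot,x_0)\in L_1$ (as for Matérn kernels, where $|\widehat g|$ is independent of $x$ and integrable); then $E_\beta\le1$ suffices and the bound is immediate. With this, \eqref{SHORT_x} holds for both fields, which completes the proof.
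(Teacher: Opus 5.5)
Your time argument is correct and is essentially the paper's: positivity of $E_\beta(-s)$, the lower bound in \eqref{upperch5}, and $\beta<1$. The only difference is execution. The paper swaps $dh$ and $d\lambda$ by Tonelli and notes that $\int_0^{\infty}E_\beta(-\mu(t_0+h)^\beta|\lambda|^\alpha)\,dh=+\infty$ for every $\lambda$. Your restriction to a bounded set $S$ instead gives the more quantitative bound $\operatorname{Cov}\ge c\,h^{-\beta}$.

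For space you use the same Tonelli bound as the paper. The paper then simply asserts that $\int_{\mathbb R}|\widehat g(\lambda,x_0)|\,\|\widehat g(\lambda,\cdot)\|_{L_1}E_\beta^2(-\mu t_0^\beta|\lambda|^\alpha)\,d\lambda<+\infty$. You are right that this is the crux, but your repair has three problems.
\begin{itemize}
\item[(1)] $(1+|\lambda|^\alpha)^{-4}$ is integrable iff $\alpha>1/4$, not $\alpha>1/8$.
\item[(2)] For $\tilde U_0^g$, near the origin you must split the weight as $|\lambda|^{(\kappa_0-1)/2}\cdot|\lambda|^{(\kappa_0-1)/2}$ and use $\widehat g(\cdot,x_0)|\cdot|^{(\kappa_0-1)/2}\in L_2$. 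Putting all of $|\lambda|^{\kappa_0-1}$ opposite $\widehat g$ would require $\kappa_0>1/2$.
\item[(3)] Reading the hypothesis as also giving $\widehat g(\cdot,x_0)\in L_1$ is an extra assumption. Uniform integrability in the second variable says nothing about integrability in $\lambda$. Mat\'ern kernels cannot motivate this reading: $|\widehat g(\lambda,x)|$ does not depend on $x$, so they do not satisfy the hypothesis at all.
\end{itemize}

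This is not just a weakness of your argument. For $\alpha<1/4$ the step cannot be closed, because the spatial claim is false under the stated hypotheses. Take $0\le\alpha<1/4$, $p:=1-2\alpha\in(1/2,1]$, and $\widehat g(\lambda,x):=(1+|\lambda|)^{-p}\mathbf{1}_{\{|x-x_0|\le(1+|\lambda|)^p/2\}}$. Then:
\begin{itemize}
\item[(a)] each $g(\cdot,x)$ is real, even and in $L_2(\mathbb R)$;
\item[(b)] $\|\widehat g(\lambda,\cdot)\|_{L_1}=1$ for every $\lambda$;
\item[(c)] $\widehat g(\cdot,x_0)>0$.
\end{itemize}
Yet the lower bound in \eqref{upperch5} gives, for $|h|\ge 1$,
\[
\operatorname{Cov}\bigl(U_0^g(t_0,x_0),U_0^g(t_0,x_0+h)\bigr)\ge c\int_{|\lambda|\ge(2|h|)^{1/p}}|\lambda|^{-2p-2\alpha}\,d\lambda=\frac{c'}{|h|},
\]
so \eqref{SHORT_x} fails.

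What your argument actually proves, once the exponent is corrected, is the valid version of the statement. Spatial short-range dependence holds under the additional condition $\widehat g(\cdot,x_0)E_\beta^2(-\mu t_0^\beta|\cdot|^\alpha)\in L_1(\mathbb R)$, with the extra weight $|\lambda|^{\kappa_0-1}$ for $\tilde U_0^g$. This condition holds, for instance, for $U_0^g$ whenever $\alpha>1/4$. You should state it as an added hypothesis, not as an interpretation of the given one. The paper's proof has the same gap and does not acknowledge it.
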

\begin{proof}
Note that, by Theorem~\ref{ThmP2.1} and the positivity of the Mittag-Leffler functions for negative arguments, it holds
\begin{align}
&\int_0^{+\infty}\left|\operatorname{Cov}\bigl(U_0^g(t_0,x_0),U_0^g(t_0+h,x_0)\bigr)\right|dh\nonumber\\
&\quad =
C(\bar{\kappa},\bar{w},\bar{A})
 \int_0^{+\infty}\left|
\int_{\mathbb{R}}
|\widehat{g}(\lambda,x_0)|^2
E_\beta(-\mu t_0^\beta |\lambda|^\alpha)
E_\beta(-\mu (t_0+h)^\beta |\lambda|^\alpha)\,d\lambda\right|dh
\nonumber\\
&\quad =
C(\bar{\kappa},\bar{w},\bar{A})
\int_{\mathbb{R}}
|\widehat{g}(\lambda,x_0)|^2
E_\beta(-\mu t_0^\beta |\lambda|^\alpha)
\int_0^{+\infty}E_\beta(-\mu (t_0+h)^\beta |\lambda|^\alpha)dh\,d\lambda.\label{covint}
\end{align}
It follows from (\ref{upperch5}) and $\beta\in(0,1)$ that
\[\int_0^{+\infty}E_\beta(-\mu (t_0+h)^\beta |\lambda|^\alpha)dh\ge \int_0^{+\infty}\frac{dh}{1 + \Gamma(1-\beta)\mu|\lambda|^\alpha (t_0+h)^\beta }=+\infty.\]
Thus, the integral in (\ref{covint}) is divergent and the limit random field is long-range dependent in time at $(x_0,t_0)$.

By Theorem~\ref{ThmP2.1}, for the spatial covariance function
\begin{align*}
&\int_{\mathbb R}\left|\operatorname{Cov}\bigl(U_0^g(t_0,x_0),U_0^g(t_0,x_0+h)\bigr)\right|dh\\
&\quad \le
C(\bar{\kappa},\bar{w},\bar{A})
\times \int_{\mathbb R}
\int_{\mathbb R}
|\widehat{g}(\lambda,x_0)||\widehat{g}(\lambda,x_0+h)|
E_\beta^2(-\mu t_0^\beta |\lambda|^\alpha)
\,d\lambda dh\\
&\quad  =C(\bar{\kappa},\bar{w},\bar{A})
\times
\int_{\mathbb R}
|\widehat{g}(\lambda,x_0)|\int_{\mathbb R}|\widehat{g}(\lambda,h)|dh\,
E_\beta^2(-\mu t_0^\beta |\lambda|^\alpha)
\,d\lambda<+\infty,
\end{align*}
which implies spatial short-range dependence.
The proof for the limit field $\tilde{U}_0^{g}(t,x)$ follows by the same arguments.
\end{proof}

\begin{remark}
By the Cauchy--Schwarz inequality,
\[\|\hat{g}(\lambda,\cdot)\|_{L_1(\mathbb{R})}
\le
\left(\int_{\mathbb{R}} (1+|x|^2)^{-s}\, dx \right)^{1/2}
\left(\int_{\mathbb{R}} (1+|x|^2)^s |\hat{g}(\lambda,x)|^2\, dx \right)^{1/2}.
\]
Therefore, if
$g(t_0,\cdot) \in H^s(\mathbb{R})$ for some $s > {1}/{2}$,
then the Fourier transform $\hat{g}(\lambda,\cdot) \in L_1(\mathbb{R}),$
which guaranties short-range dependence in space of the limit random fields $U_0^{g}(t,x)$ and $\tilde{U}_0^{g}(t,x)$ at $(x_0,t_0)$.
\end{remark}
 \begin{example}
For the case of Mat\'ern kernels, the Fourier transform $\widehat g(\lambda,x)$ is given by~\eqref{MFourier} and is nondegenerate. Therefore, the corresponding limit fields $U_0^g(t,x)$ and $\widetilde U_0^g(t,x)$ are long-range dependent in time for all $(x_0,t_0).$

Notice that by~\eqref{MFourier} the function $\widehat g(\lambda,\cdot)\notin L_1(\mathbb R).$ Hence, the second part of Theorem~\ref{ThmP2.6} can't be applied directly in the Mat\'ern case. However, in special cases, one can explicitly compute the covariance function and check condition~\eqref{SHORT_x}.

Let $\alpha=\beta=a=1.$ Then, as $E_1(z)=e^z,$ by \eqref{CovTH3.1a0=0} and \eqref{MFourier},
\[
\operatorname{Cov}\bigl(U_0^g(t_0,x_0),U_0^g(t_0,x_0+h)\bigr)
=
C\int_{\mathbb R}
e^{-i\lambda h}
e^{-2\mu t_0|\lambda|}
(1+\lambda^2)^{-(2\nu+1)}
\,d\lambda .
\]

First, note that since
\[
P_{\mu,t_0}(h)
:=
\int_{\mathbb R}
e^{-i\lambda h}
e^{-2\mu t_0|\lambda|}
\,d\lambda
=
\frac{4\mu t_0}{(2\mu t_0)^2+h^2},
\]
then $P_{\mu,t_0}(\cdot)\in L_1(\mathbb R).$

Next, by computing the following integral we obtain
\[
G_\nu(h)
:=
\int_{\mathbb R}
e^{-i\lambda h}
(1+\lambda^2)^{-(2\nu+1)}
\,d\lambda
=
\frac{2\sqrt{\pi}}{\Gamma(2\nu+1)}
\left(\frac{|h|}{2}\right)^{2\nu+\frac12}
K_{2\nu+\frac12}(|h|).
\]

Since $ K_{2\nu+\frac12}(|h|)\sim C|h|^{-2\nu-\frac12},$ when
$ |h|\to 0,$ \cite[formula 9.6.9]{abramowitz1972}, and \\ $K_{2\nu+\frac12}(|h|)\sim C|h|^{-1/2}e^{-|h|},$ when $
|h|\to\infty,$ \cite[formula 9.7.2]{abramowitz1972}, it follows that $
G_\nu(\cdot)\in L_1(\mathbb R).$

Because the covariance function $\operatorname{Cov}\bigl(U_0^g(t_0,x_0),U_0^g(t_0,x_0+h)\bigr)$
is the Fourier transforms of the product $e^{-2\mu t_0|\lambda|}
(1+\lambda^2)^{-(2\nu+1)}$, it can be written as the convolution
\[\operatorname{Cov}\left(U_0^g(t_0,x_0),U_0^g(t_0,x_0+h)\right)
=
C\cdot
(P_{\mu,t_0}*G_\nu)(h).
\]

Hence, by Young's convolution inequality, we obtain
\[
\int_{\mathbb R}\left|\operatorname{Cov}\bigl(U_0^g(t_0,x_0),U_0^g(t_0,x_0+h)\bigr)\right|dh
\le
C
\|P_{\mu,t_0}\|_{L_1}
\|G_\nu\|_{L_1}
<+\infty .
\]
and the limit fields $U_0^{g}(t,x)$ is short-range dependent in space at $(x_0,t_0)$.
  \end{example}

\section{Conclusion} \label{conclusionP2}

This paper investigated the limit behaviour of solutions to the FRBE with random initial conditions exhibiting both classical and cyclic long-memory structures. The multiscaling limit analysis developed in~\cite{Alghamdi2024, Alghamdi2025II} was extended to a broad class of kernel-smoothing transformations. Properties of the limit fields were studied. In particular, the limit fields are non-stationary in both space and time. Their mean-square and sample-path H\"{o}lder continuities in temporal and spatial variables were established. The short/long-range dependence properties of the limit fields in space and time were also analysed. It was demonstrated how the limit fields and their properties depend on the smoothing kernels, the parameters of the FRBE, and the locations of the spectral singularities.

Several directions for future research that arise from the present work include
\begin{enumerate}
\item Verifying the conjecture that the limiting fields are non-degenerate only for the values of the scaling parameters $\rho_1$, $\rho_2$, and $\rho_3$ specified in this paper;
\item Extending the asymptotic framework to settings with regularly varying spectral densities (see, for example, \cite{Leonenko2013II, Olenko2005, Olenko2007});
\item Generalising the results to subordinated initial conditions (see, for example, \cite{Anh1999II, Leonenko1999, Knopova2004});
\item Extending the analysis to multidimensional spatio-temporal random fields (see \cite{Anh2000, Anh2002, Olenko2013II});
\item Applying the current methodology to other classes of stochastic partial differential equations (for example, \cite{Angulo2000, kozachenko2020}).
\end{enumerate}

\begin{funding}
This research was supported by the Australian Research Council's Discovery Projects funding scheme (project number DP220101680).  A.~Olenko was also partially supported by La Trobe University's SCEMS CaRE and Beyond grant.
\end{funding}

\bibliographystyle{vmsta2-mathphys}
\bibliography{Bibliography}

@article{Anh2003,
title = {Harmonic analysis of random fractional diffusion–wave equations},
journal = {Applied Mathematics and Computation},
volume = {141},
number = {1},
pages = {77 -- 85},
year = {2003},
author = {Anh, V. and  Leonenko, N.}, 
doi={https://doi.org/10.1016/S0096-3003(02)00322-3}
}

@book{Gorenflo2020,
  title={{Mittag-Leffler} Functions, Related Topics and Applications},
  author={Gorenflo, R. and Kilbas, A. A. and Mainardi, F. and Rogosin, S. V. and others},
  year={2014},
  publisher={Springer},   address = {Berlin}, doi={10.1007/978-3-662-43930-2}
}

@article{Anh2000,
  title   = {Scaling laws for fractional diffusion--wave equations with singular data},  author  = {Anh, V. and Leonenko, N.},
  journal = {Statistics \& Probability Letters},
  year    = {2000},
  volume  = {48},
  number  = {3},
  pages   = {239 -- 252},
  doi     = {10.1016/S0167-7152(00)00003-1}
}

@article{Alghamdi2024,
  author  = {Alghamdi, M. M. A. and Leonenko, N. and Olenko, A.},
  title   = {Multiscaling limit theorems for stochastic {FPDE} with cyclic long-range dependence},
  journal = {Brazilian Journal of Probability and Statistics},
  year    = {2025},
  volume  = {39},
  number  = {2},
  pages   = {226--247},
  doi     = {10.1214/25-BJPS633}
}

@article{Caputo1967,
  author  = {Caputo, M.},
  title   = {Linear model of dissipation whose {\(Q\)} is almost frequency independent, {II}},
  journal = {Geophysical Journal International},
  volume  = {13},
  number  = {5},
  pages   = {529 -- 539},
  year    = {1967}, doi={10.1111/j.1365-246X.1967.tb02303.x}
}

@article{Metzler1994,
  author  = {Metzler, R. and Gl\"{o}ckle, W. G. and Nonnenmacher, T. F.},
  title   = {Fractional model equation for anomalous diffusion},
  journal = {Physica A},
  volume  = {211},
  pages   = {13 -- 24}, year  = {1994}, doi={10.1016/0378-4371(94)90064-7}
}

@book{Podlubny1998,
  title     = {Fractional Differential Equations: An Introduction to Fractional Derivatives, Fractional Differential Equations, to Methods of Their Solution and Some of Their Applications},
  author    = {Podlubny, I.},
  year      = {1998},
  publisher = {Academic Press},
  address   = {San Diego}, doi={10.1016/S0076-5392(99)80031-9 }
}

@book{Oldham1974,
  author    = {K. B. Oldham and J. Spanier},
  title     = {The Fractional Calculus: Theory and Applications of Differentiation and Integration to Arbitrary Order},
  publisher = {Academic Press},
  address   = {New York},
  year      = {1974}, doi={10.1016/s0076-5392(09)x6012-1.}
}

@article{Gay1990,
  title={On a class of random field models which allows long range dependence},
  author={Gay, R. and Heyde, C. C.},
  journal={Biometrika},
  volume={77},
  number={2},
  pages={401 -- 403},
  year={1990}, doi={10.1093/biomet/77.2.401}
}

@article{Angulo2000, 
title={Fractional diffusion and fractional heat equation},
volume={32},
number={4}, 
journal={Advances in Applied Probability}, 
author={J. Angulo and M. D. Ruiz-Medina and V. Anh and W. Grecksch}, year={2000},
pages={1077 -- 1099}, doi={10.1017/S0001867800010478}
}

@article{Anh1999,
  author    = {V. Anh and J. M. Angulo and M. D. Ruiz-Medina},
  title     = {Possible long-range dependence in fractional random fields},
  journal   = {Journal of Statistical Planning and Inference},
  volume    = {80},
  number    = {1-2},
  pages     = {95 -- 110},
  year      = {1999}, doi={10.1016/S0378-3758(98)00244-4}
}

@article{Albeverio1994,
  author    = {S. Albeverio and S. A. Molchanov and D. Surgailis},
  title     = {Stratified structure of the {Universe} and {Burgers’} equation: A probabilistic approach},
  journal   = {Probability Theory and Related Fields},
  volume    = {100},number={4},
  pages     = {457 -- 484},
  year      = {1994}, doi={https://doi.org/10.1007/BF01268990}
}

@article{Leonenko1998, 
author={Leonenko, N. and  Woyczynski, W.},
  title={Scaling limits of solutions of the heat equation for singular non-{Gaussian} data},
  journal={Journal of Statistical Physics},
  year={1998},
  volume={91}, number={1},
  pages={423 -- 438}, doi={10.1023/A:1023060625577}
  }

@article{Beghin2000,
  title={Gaussian limiting behavior of the rescaled solution to the linear {Korteweg--de Vries} equation with random initial conditions},
  author={L. Beghin and V. Knopova and N. Leonenko and E. Orsingher},
  journal={Journal of Statistical Physics},
  volume={99},
  pages={769 -- 781},number={3},
  year={2000}, doi={10.1023/A:1018687327580}}

@article{Anh2002,
  title={Renormalization and homogenization of fractional diffusion equations with random data},
  author={Anh, V. and  Leonenko, N.},
  journal={Probability Theory and Related Fields},
  year={2002},
  volume={124}, number={3},
  pages={381 -- 408}, doi={10.1007/s004400200217}
  }

@article{Leonenko1998a,
  title={Exact parabolic asymptotics for singular $n$-{D} {Burgers'} random fields: {Gaussian} approximation},
  author={Leonenko, N. and  Woyczynski, W.},
  journal={Stochastic Processes and their Applications},
  year={1998},
  volume={76},number={2},
  pages={141 -- 165},
doi={10.1016/S0304-4149(98)00031-3}
}

@book{Leonenko1999,
  title={Limit Theorems for Random Fields with Singular Spectrum},
  author={N. Leonenko},
  year={1999},
  publisher={Kluwer Academic}, address= {Dordrecht},
  doi={10.1007/978-94-011-4607-4}
}

@article{Broadbrige2020,
  author = {Broadbridge, P. and Kolesnik, A. and Leonenko, N. and Olenko, A. and Omari, D.},
  title = {Spherically restricted random hyperbolic diffusion},
  journal = {Entropy},
  volume = {22},
  number = {2},
  pages={217},
  year = {2020},
doi={10.3390/e22020217}
}

@article{Leonenko2024,
  title={On fractional spherically restricted hyperbolic diffusion random field},
  author={N. Leonenko and A. Olenko and J. Vaz},
  journal={Communications in Nonlinear Science and Numerical Simulation},
  volume={131},
  pages={107866},
  year={2024}, doi={10.1016/j.cnsns.2024.107866}
}

@article{Anh1999II,
  title={Non-{Gaussian} scenarios for the heat equation with singular initial conditions},
  author={Anh, V. and  Leonenko, N.},
  journal={Stochastic Processes and their Applications},
  year={1999},number={1},
  volume={84},
  pages={91 -- 114},
doi={10.1016/S0304-4149(99)00053-8}
}

@article{Mainardi2014,
  title={On some properties of the {Mittag-{Leffler}} function ${E_{\alpha}} (-t^{\alpha})$, completely monotone for $t> 0$ with $0<\alpha< 1$},
  author={Mainardi, F.},
  journal={Discrete and Continuous Dynamical Systems - Series B},
  volume={19},
  number={7},pages = {2267 -- 2278},
  year={2014},
doi={10.3934/dcdsb.2014.19.2267}
}

@article{Anh2021,
  title={Fractional stochastic partial differential equation for random tangent fields on the sphere},
  author={V. Anh and A. Olenko and Y. Wang},
  journal={Theory of Probability and Mathematical Statistics},
  volume={104},
  pages={3 -- 22},
  year={2021},
  doi={https://doi.org/10.1090/tpms/1142}
}

@article{Broadbridge2024,
  author    = {P. Broadbridge and I. Donhauzer and A. Olenko},
  title     = {Stochastic diffusion within expanding space--time},
  journal   = {Zeitschrift f{\"u}r angewandte Mathematik und Physik},
  year      = {2024},
  volume    = {75},
  number    = {2},
  pages     = {42},   
doi     = {10.1007/s00033-024-02191-1}
}

@article{Alghamdi2025,
      title={Multiscaling asymptotic behavior of solutions to random high-order heat equations}, 
      author={Alghamdi, M. M. A. and Leonenko, N. and Olenko, A.},
      year={2025},journal={ Submitted; arXiv:2510.14153},
  pages     = {28}, doi    = {10.48550/arXiv.2510.14153}     
}

@article{Leonenko2013II,
  author  = {Leonenko, N. and Olenko, A.},
  title   = {{Tauberian and Abelian theorems for long-range dependent random fields}},
  journal = {Methodology and Computing in Applied Probability},
  volume  = {15},
  pages   = {715 -- 742},number={4},
  year    = {2013},
  doi={10.1007/s11009-012-9276-9}
}

@article{Olenko2005,
  author       = {Olenko, A.},
  title        = {Tauberian theorems for random fields with an {OR} spectrum. {I}},
  journal      = {Theory of Probability and Mathematical Statistics},
  volume       = {73},
  pages        = {135 -- 149},
  year         = {2005},
  doi={10.1090/S0094-9000-07-00688-6}
}

@article{Olenko2007,
  author       = {Olenko, A.},
  title        = {Tauberian theorem for fields with an {OR} spectrum. {II}},
  journal      = {Theory of Probability and Mathematical Statistics},
  volume       = {74},
  pages        = {93 -- 111},
  year         = {2007},
   doi={https://doi.org/10.1090/S0094-9000-07-00700-4}}

@article{Liu2018,
  author  = {Liu, G-R. and Shieh, N-R.},
  title   = {Multi-scaling limits for time-fractional relativistic diffusion equations with random initial data},
  journal = {Theory of Probability and Mathematical Statistics},
  year    = {2018},
  volume  = {95},
  pages   = {109 -- 130},
  doi={10.1090/tpms/1025}
}

@article{Liu2015,
  title={Multi-scaling limits for relativistic diffusion equations with random initial data},
  author={Liu, G-R. and Shieh, N-R.},
  journal={Transactions of the American Mathematical Society},
  volume={367},
  number={5},
  pages={3423 -- 3446},
  year={2015},
  doi={10.1090/S0002-9947-2014-06498-2}
}

@article{Liu2010,
  title={Scaling limits for some {PDE} systems with random initial conditions},
  author={Liu, G-R. and Shieh, N-R.},
  journal={Stochastic Analysis and Applications},
  volume={28},
  number={3},
  pages={505 -- 522},
  year={2010},
  publisher={Taylor \& Francis},
  doi={10.1080/07362991003704969}
}

@article{Anh2001a,
  title={Spectral Analysis of Fractional Kinetic Equations with Random Data},
  author={Anh, V. and  Leonenko, N.},
  journal={Journal of Statistical Physics},
  year={2001},
  volume={104},
  pages={1349-1387},
  doi={10.1023/A:1010474332598}
  }

@article{Alghamdi2025II,
  title={On asymptotic behavior of solutions to random fractional {Riesz-Bessel} equations with cyclic long memory initial conditions},
  author={Alghamdi, M. M. A. and  Olenko, A.},
  year={2026},
  journal = {to appear in Theory of Probability and Mathematical Statistics},
  doi = {10.48550/arXiv.2512.09308
}
}

@article{Porcu2024,
author = {Porcu, E. and Bevilacqua, M. and  Schaback, R. and Oates, C.J.},
title = {{The Matérn} Model: A Journey Through Statistics, Numerical Analysis and Machine Learning},
volume = {39},
journal = {Statistical Science},
number = {3},
publisher = {Institute of Mathematical Statistics},
pages = {469 -- 492},
year = {2024},
doi = {10.1214/24-STS923},
}

@article{Leonenko2022,
  author    = {Leonenko, N. and Malyarenko, A. and Olenko, A.},
  title     = {On spectral theory of random fields in the ball},
  journal   = {Theory of Probability and Mathematical Statistics},
  year      = {2022},
  volume    = {107},
  pages     = {61--76},
  doi       = {https://doi.org/10.1090/tpms/1175}
}

@article{Zhang2022,
  title={A PDE-based adaptive kernel method for solving optimal filtering problems},
  author={Zhang, Z. and Archibald, R. and Bao, F.},
  journal={Journal of Machine Learning for Modeling and Computing},
  volume={3},
  number={3},
  year={2022},
pages={37--59}, doi = {10.1615/JMachLearnModelComput.2022043526}
}

@incollection{De1956,
  title={Random Solutions of Partial Differential Equations},
  author={De F{\'e}riet, J. K.},
  booktitle={Proceedings of the Third Berkeley Symposium on Mathematical Statistics and Probability},
  volume={3},
  pages={ 199 -– 208},
  year={1956},
publisher={University of California Press}, 
address={ Berkeley},doi={10.1525/9780520350694-013}
}

@article{Rosenblatt1968,
  title={Remarks on the {Burgers} equation},
  author={Rosenblatt, M.},
  journal={Journal of Mathematical Physics},
  volume={9},
  number={7},
  pages={1129 -- 1136},
  year={1968},
  doi={10.1063/1.1664687}}

@incollection{Becus1980,
  title={Variational Formulation of Some Problems for the Random Heat Equation},
editor = {G. Adomian},
  author={B{\'e}cus, G.},
  booktitle={Applied Stochastic Processes},
  pages={19 -- 36},
  year={1980},
  publisher = {Academic Press}, address   = {New York}, doi = {10.1016/B978-0-12-044380-2.50007-3}}

@article{Uboe1995II,
  title={A stability property of the stochastic heat equation},
  author={Ubøe, J. and Zhang, T.},
  journal={Stochastic Processes and Their Applications},
  volume={60},
  number={2},
  pages={247 -- 260},
  year={1995},
  publisher={Elsevier}, doi= {10.1016/0304-4149(95)00062-3}}

@article{dobrushin1979,
  title={Non-central limit theorems for non-linear functional of {Gaussian} fields},
  author={R. Dobrushin and P. Major},
  journal={Zeitschrift f{\"u}r Wahrscheinlichkeitstheorie und Verwandte Gebiete},
  volume={50},
  pages={27 -- 52},
  year={1979}, doi={10.1007/BF00535673}}

@article{Taqqu1979,
  title= {Convergence of integrated processes of arbitrary {Hermite} rank},
  author={Taqqu, M.},
  journal={Zeitschrift f{\"u}r Wahrscheinlichkeitstheorie und Verwandte Gebiete},
  volume={50},
  number={1},
  pages={53 -- 83},
  year={1979}, doi= {10.1007/BF00535674}}

@article{Knopova2004,
  author = {V. Knopova},
  title= {Limit behaviour of the renormalized solution to the {Airy} equation with strongly dependent initial data},
  journal      = {Random Operators and Stochastic Equations},
  volume       = {12},
  number       = {1},
  pages        = {35 -- 42},
  year         = {2004}, doi     = {10.1515/156939704323067816}
}

@article{Alodat2020,
author = {Alodat, T. and Leonenko, N. and Olenko, A.},
title = {Limit theorems for filtered long-range dependent random fields},
journal = {Stochastics},
volume = {92},
number = {8},
pages = {1175--1196},
year = {2020},
publisher = {Taylor \& Francis},
doi = {10.1080/17442508.2019.1691211}}

@book{watson1922,
  title={A Treatise on The Theory of {Bessel} Functions},
  author={Watson, G. N.},
  year={1944},
  publisher={Cambridge University Press},
  address   = {Cambridge},
  doi={10.2307/3609752}
}

@article{Olenko2013II,
author = {A. Olenko},
title = {Limit theorems for weighted functionals of cyclical long-range dependent random fields},
journal = {Stochastic Analysis and Applications},
volume = {31},
number = {2},
pages = {199 -- 213},
year = {2013},doi= {10.1080/07362994.2013.741410}}

@book{abramowitz1972,
  title={Handbook of Mathematical Functions with Formulas, Graphs, and Mathematical Tables},
  author={Abramowitz, M. and Stegun, I. A.},
  year={1972},
  publisher={Dover Publications},
  address={New York},
  doi={}
}

@article{kozachenko2020,
    author = {Y. Kozachenko and E. Orsingher and L. Sakhno and O. Vasylyk},
    title = {Estimates for distribution of suprema of solutions to higher-order partial differential equations with random initial conditions},
    journal = {Modern Stochastics: Theory and Applications},
    volume = {7},
    number = {1},
    year = {2020},
    pages = {79--96},
    doi = {10.15559/19-VMSTA146},
}

\end{document}